\documentclass[a4paper,11pt]{amsart}

\usepackage{amsmath}
\usepackage{amssymb}
\usepackage{amsthm}
\usepackage[dvipdfmx]{graphicx}
\usepackage{amscd}
\usepackage{comment}
\usepackage{color}
\usepackage{setspace}    
\usepackage{url}                           

\begin{document}

\numberwithin{equation}{section}

\theoremstyle{plain}
\newtheorem{theorem}{Theorem}[section]
\newtheorem{proposition}[theorem]{Proposition}
\newtheorem{lemma}[theorem]{Lemma}
\newtheorem{corollary}[theorem]{Corollary}
\newtheorem{conjecture}{Conjecture}[section]

\theoremstyle{definition}
\newtheorem{definition}{Definition}[section]
\newtheorem{example}{Example}[section]
\newtheorem{problem}{Problem}[section]
\newtheorem{convention}{Convention}[section]

\newtheorem{remark}{Remark}[section]

\title[Acylindrical hyperbolicity of some Artin-Tits groups]{Acylindrical hyperbolicity of Artin-Tits groups associated to triangle-free graphs and cones over square-free bipartite graphs}

\author[M. Kato]{Motoko Kato}
\thanks{The first author is supported by JSPS KAKENHI Grant-in-Aid for Research Activity Start-up, Grant Number 19K23406 and JSPS KAKENHI Grant-in-Aid for Young Scientists, Grant Number 20K14311.}
\address[M. Kato]{Graduate School of Science and Engineering, Mathematics, Physics and Earth Sciences, Ehime University, 2-5 Bunkyo-cho, Matsuyama, Ehime, 790-8577 Japan}
\email{kato.motoko.yy@ehime-u.ac.jp}

\author[S. Oguni]{Shin-ichi Oguni}
\thanks{The second author is supported by JSPS KAKENHI Grant-in-Aid for Young Scientists (B), Grant Number 16K17595 and 20K03590.}
\address[S. Oguni]{Graduate School of Science and Engineering, Mathematics, Physics and Earth Sciences, Ehime University, 2-5 Bunkyo-cho, Matsuyama, Ehime, 790-8577 Japan}
\email{oguni.shinichi.mb@ehime-u.ac.jp}

\keywords{}
\subjclass[]{}

\date{\today}         

\maketitle

\begin{abstract}
It is conjectured that the central quotient of any irreducible Artin-Tits group is either virtually cyclic or acylindrically hyperbolic.
We prove this conjecture for Artin-Tits groups that are known to be CAT(0) groups by a result of Brady and McCammond,
that is, Artin-Tits groups associated to graphs having no $3$-cycles and Artin-Tits groups of almost large type associated to graphs admitting appropriate directions.
In particular, the latter family contains Artin-Tits groups of large type associated to cones over square-free bipartite graphs. 
\end{abstract}

\section{Introduction}
Artin-Tits groups are groups with special finite presentations. 
Let $\Gamma$ be a finite simple graph
with the vertex set $V(\Gamma)$ and the edge set $E(\Gamma)$.
An edge $e$ consists of two endvertices, which we denote by $s_e$ and $t_e$.
We suppose that edges $e$ are labeled by integers $m(e)>1$.
The {\it Artin-Tits group} $A_{\Gamma}$ associated to $\Gamma$ is defined by the following presentation:
	\begin{align}\label{pres_standard_eq}
	A_{\Gamma}=\langle V(\Gamma)\mid \underbrace{s_e t_e s_e t_e \cdots}_{\text{length $m(e)$}}	=\underbrace{t_e s_e t_e s_e\cdots}_{\text{length $m(e)$}} \quad \text{for all $e\in E(\Gamma)$ } 	\rangle.
	\end{align}
Free abelian groups, free groups and braid groups are examples of Artin-Tits groups.
If we add relations $v^2=1$ to (\ref{pres_standard_eq}) for all $v\in V(\Gamma)$,
then we get the associated {\it Coxeter group} $W_{\Gamma}$.
In terms of the properties of $W_{\Gamma}$, we can define important classes of Artin-Tits groups.
For example, $A_{\Gamma}$ is said to be of {\it finite type} if $W_{\Gamma}$ is finite. Others are said to be of {\it infinite type}.
We mainly argue on Artin-Tits groups of infinite type.

We can also define classes of Artin-Tits groups in terms of edge labels of $\Gamma$.
$A_{\Gamma}$ is said to be
	\begin{itemize}
	\item {\it right-angled} if all edges of $\Gamma$ are labeled by $2$, or
	\item of {\it large type} if all edges of $\Gamma$ are labeled by integers greater than $2$.
	\end{itemize}

For general Artin-Tits groups, many basic questions are still open (refer to \cite{Charney2008PROBLEMSRT}).
For example, it is unknown whether the following are equivalent or not for any 
Artin-Tits group $A_{\Gamma}$ :
	\begin{itemize}
	\item[(i)] $A_{\Gamma}$ is directly indecomposable, that is, it does not decompose as a direct product of two non-trivial subgroups;
	\item[(ii)] $A_{\Gamma}$ is irreducible, that is, the defining graph $\Gamma$ does not decompose as a join of two non-empty subgraphs such that all edges between them are labeled by $2$.
	\end{itemize}
Note that (i) clearly implies (ii).

Now, we consider questions related to group actions on hyperbolic/non-positively curved spaces. 
The following is one of the biggest problems on such actions of Artin-Tits groups.
\begin{problem}[{\cite[Problem 4]{Charney2008PROBLEMSRT}}]\label{CAT(0)_conj}
Which Artin-Tits groups are {\it CAT(0) groups}, that is, groups acting geometrically on CAT(0) spaces?
\end{problem}
\noindent Here, {\it CAT(0) spaces} are geodesic spaces where every geodesic triangle is not fatter than the comparison triangle in the Euclidean plane (see \cite{MR1744486} for the precise definition).
A group action is said to be {\it geometric} if the action is proper, cocompact and by isometries.

The following is a related conjecture, which is the main concern of this paper.
\begin{conjecture}[{\cite[Conjecture B]{haettel2019xxl}}]\label{acyl_conj}
The central quotient of every irreducible Artin-Tits group is either virtually cyclic or acylindrically hyperbolic. 
\end{conjecture}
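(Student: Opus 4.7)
The plan is to establish the conjecture for the two CAT(0) families singled out in the abstract by producing, in each case, a non-elementary acylindrical action on a hyperbolic space. The route I would follow for a CAT(0) group $G$ is the standard criterion of Sisto (with roots in Bestvina--Fujiwara and Caprace--Sageev): if $G$ acts properly and cocompactly on a CAT(0) space $X$ and contains a \emph{rank-one} isometry whose axis is not preserved setwise by a finite-index subgroup, then $G$ is either virtually cyclic or acylindrically hyperbolic. To deduce the conjecture for $A_\Gamma/Z(A_\Gamma)$ rather than for $A_\Gamma$, I would additionally verify that the center acts elliptically on the constructed hyperbolic space, so that the acylindrical action descends to the central quotient.

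First I would recall the Brady--McCammond CAT(0) complex $X_\Gamma$ on which $A_\Gamma$ acts geometrically under the standing hypotheses, and reformulate the goal as producing a rank-one axis in $X_\Gamma$. The next step is a careful choice of candidate hyperbolic element $g \in A_\Gamma$. In the triangle-free case I would look for $g$ of the form $g = st$, or a longer cyclically reduced word, with $s,t$ chosen so that the corresponding axis cannot be contained in a standard flat; the absence of $3$-cycles in $\Gamma$ is the key structural input, since flats in $X_\Gamma$ arise from parabolic subgroups supported on sufficiently complete subgraphs of $\Gamma$. In the almost large type case with appropriate directions, I would instead pick $g$ to track a consistently directed path, so that the direction conditions can be used to rule out flats adjacent to the axis.

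Verifying that a chosen $g$ is in fact rank-one is the technical heart of the proposal. I would use the equivalent characterization that the axis of $g$ does not bound an isometrically embedded Euclidean half-plane, together with the local link geometry of $X_\Gamma$, to show that any such half-plane would force commutation or braid-type relations among generators that are forbidden by either triangle-freeness or by the direction assumption. Non-elementarity of the resulting action then follows provided $A_\Gamma$ is not itself virtually cyclic, which is guaranteed by the irreducibility hypothesis on $\Gamma$ together with elementary growth considerations.

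The main obstacle I anticipate is precisely this rank-one verification, together with ruling out the degenerate situation in which $g$ and all its $A_\Gamma$-conjugates share a common parallel class of axes, which would collapse the induced action on the Gromov boundary. Handling this requires a combinatorial analysis of centralizers and of the flats of $X_\Gamma$, and it is the point where the two families behave quite differently: for triangle-free $\Gamma$ the argument is essentially negative, since the missing $3$-cycles preclude the local joins needed to build a flat through the axis, whereas for almost large type one must use the direction data actively to exclude positive configurations that could otherwise persist. Finally, the irreducibility of $\Gamma$ is exactly the input that prevents the axis from being stabilized by a direct factor, separating the virtually cyclic and acylindrically hyperbolic alternatives in the Sisto-type dichotomy.
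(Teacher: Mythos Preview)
Your overall strategy coincides with the paper's: work in the Brady--McCammond CAT(0) complex $\widetilde{K}_\Gamma$, exhibit a rank-one isometry, and invoke Sisto's criterion to get the virtually-cyclic/acylindrically-hyperbolic dichotomy. Two places where the paper's execution differs from your sketch are worth noting. First, the rank-one verification is not carried out via abstract commutation or braid obstructions but via explicit distance computations in the vertex link $L_\Gamma$: one checks that the angles at vertices along the candidate axis are $\geq\pi$ (so the concatenation is a local geodesic), and then argues that any bounding half-plane would force a path of length exactly $\pi$ in $L_\Gamma$ through a specified point where no such path exists. In the triangle-free case this is organized by first reducing combinatorially to two concrete full subgraphs, a $2$-path $\Gamma_{m,n}$ with $n\geq 3$ (candidate element $\alpha_{2,n}v_3v_1$) and a $3$-path $\Gamma_{2,2,2}$ (candidate element $x_1x_3$); your generic choice $g=st$ would not suffice. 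Second, there is no need to descend to the central quotient as you propose: once $A_\Gamma$ is shown to be acylindrically hyperbolic its center is finite, and since $K_\Gamma$ is a finite $K(\pi,1)$ the group is torsion-free, so the center is trivial and $A_\Gamma/Z(A_\Gamma)=A_\Gamma$. Your step of checking ellipticity of the center is therefore unnecessary, and your concern about conjugates of $g$ sharing a parallel class is already absorbed into the virtually-cyclic alternative of Sisto's theorem.
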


The definition of acylindrical hyperbolicity of groups is recalled in Section~\ref{triangle-free_sec}.
We can find many applications of acylindrically hyperbolic groups in \cite{MR3589159}, \cite{MR3430352} etc.
The $n$-strand braid group $B_n$ $(n\geq 3)$ is an Artin-Tits group and the central quotient is acylindrically hyperbolic
(\cite{MR1914565}, \cite{MR2390326}, \cite{MR2367021}).
In addition to this motivating example, Conjecture~\ref{acyl_conj} holds for Artin-Tits groups in the following list.
	\begin{itemize}
	\item[(A1)] Artin-Tits groups of finite type (\cite{MR3719080}).
	\item[(A2)] Right-angled Artin-Tits groups (\cite{MR2827012}, \cite{MR3192368}).
	\item[(A3)] Two-dimensional Artin-Tits groups such that the associated Coxeter groups are hyperbolic  (\cite{alex2019acylindrical}, see also \cite{haettel2019xxl} for $A_{\Gamma}$ such that all edges of $\Gamma$ are labeled by integers greater than $4$). Note that such a group is characterized as  $A_{\Gamma}$ such that every triplet $(v_1, v_2,v_3)$ of vertices of $\Gamma$ satisfies $$\frac{1}{m_{1,2}} + \frac{1}{m_{2,3}} + \frac{1}{m_{3,1}}<1,$$ where $$m_{i,j}=\begin{cases} \text{the label of the edge between $v_i$ and $v_j$} &(\text{if $v_i$ and $v_j$ are adjacent})\\\infty &(\text{if $v_i$ and $v_j$ are not adjacent}) \end{cases}$$ (\cite{Moussong}).
	 \item[(A4)] $A_{\Gamma}$ such that $\Gamma$ is not a join of two non-empty subgraphs (\cite{MR3993762}, see also \cite{MR3966610} for Artin-Tits groups of type FC such that the defining graphs have diameter greater than $2$).
	 \end{itemize}
	
To the above list, we add Artin-Tits groups that are known to be CAT(0) groups by a result of Brady and McCammond \cite{MR1770639}.
We discuss the following two cases.
The first case is when $\Gamma$ is triangle-free, that is, $\Gamma$ does not contain $3$-cycles. 
We do not need any restriction on the labels of edges.              
Such Artin-Tits groups are said to be {\it triangle-free Artin-Tits groups}.

\begin{theorem}\label{triangle-free_thm}
For every triangle-free Artin-Tits group $A_{\Gamma}$ such that $\Gamma$ has three or more vertices,
the following are equivalent.
\begin{itemize}
\item[(i)] $A_{\Gamma}$ is acylindrically hyperbolic.
\item[(ii)] $A_{\Gamma}$ is directly indecomposable.
\item[(iii)] $A_{\Gamma}$ is irreducible.
\item[(iv)] $\Gamma$ is not a complete bipartite graph with all edges labeled by $2$. 
\item[(v)] $\Gamma$ is disconnected, or it contains a $2$-path full subgraph with an edge labeled by an integer greater than $2$ or a $3$-path full subgraph with all edges labeled by  $2$.
\end{itemize}
Under either of (i)-(v), and thus all of (i)-(v),
$A_{\Gamma}$ is centerless. 
In particular, Conjecture~\ref{acyl_conj} is true for triangle-free Artin-Tits groups.
\end{theorem}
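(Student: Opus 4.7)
The plan is to prove the cycle (i) $\Rightarrow$ (ii) $\Rightarrow$ (iii) $\Rightarrow$ (iv) $\Rightarrow$ (v) $\Rightarrow$ (i) and to extract the centerless claim as a byproduct. The first three implications are essentially formal: (i) $\Rightarrow$ (ii) follows from Osin's theorem that an acylindrically hyperbolic group does not split as a direct product of two infinite subgroups, combined with the fact that any non-trivial standard parabolic of an Artin-Tits group is infinite; (ii) $\Rightarrow$ (iii) is already noted as tautological in the paper; and for (iii) $\Rightarrow$ (iv), contrapositively, if $\Gamma = K_{m,n}$ with all edges labeled $2$ then the two parts of the bipartition give an explicit join decomposition with every crossing edge labeled $2$, so $A_\Gamma$ is reducible.

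For (iv) $\Rightarrow$ (v), I would argue by contrapositive. Assume $\Gamma$ is connected, contains no induced $2$-path carrying an edge of label $>2$, and no induced $3$-path with all edges labeled $2$. Since $\Gamma$ is triangle-free, connected, and has at least three vertices, every edge of $\Gamma$ sits in some induced $2$-path (take any neighbor of either endpoint, which fails to be adjacent to the other endpoint by triangle-freeness), so the first hypothesis forces every edge of $\Gamma$ to be labeled $2$. The absence of an induced $P_4$ in a connected triangle-free graph is the classical characterization of complete bipartite graphs: any two vertices at distance $2$ must have identical open neighborhoods, from which one immediately reads off a bipartition exhibiting $\Gamma = K_{m,n}$, contradicting the negation of (iv).

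The main work is (v) $\Rightarrow$ (i), which I would split according to the three disjuncts of (v). If $\Gamma$ is disconnected, then $A_\Gamma$ is a free product of non-trivial infinite torsion-free Artin-Tits groups and is acylindrically hyperbolic by Minasyan-Osin. In the other two cases I would use the CAT(0) action of $A_\Gamma$ provided by Brady-McCammond and produce an element acting as a rank-one isometry on the associated CAT(0) space, which by the Sisto/Bestvina-Bromberg-Fujiwara criterion suffices for acylindrical hyperbolicity (noting that $A_\Gamma$ is visibly not virtually cyclic once $|V(\Gamma)|\geq 2$). The candidate element should come from the standard parabolic subgroup $A_\Lambda$ on the induced subgraph $\Lambda$ witnessing (v): when $\Lambda$ is a $2$-path with an edge labeled $>2$, $A_\Lambda$ is covered by class (A3); when $\Lambda$ is $P_4$ with all edges labeled $2$, $A_\Lambda$ is a RAAG on $P_4$ covered by (A2). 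In both cases $A_\Lambda$ is acylindrically hyperbolic, and the plan is to pick a WPD element inside $A_\Lambda$ and promote it to a rank-one isometry of the ambient Brady-McCammond complex.

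The hard part will be this final lifting step: rank-oneness is a global property of the axis in the ambient complex and does not transfer automatically from a subcomplex to the whole space, since flats in the ambient complex may meet the axis in ways invisible inside $A_\Lambda$. I would expect to handle this by exploiting triangle-freeness to keep the parabolic subcomplex stabilized by $A_\Lambda$ strongly convex in the Brady-McCammond complex and to rule out bi-infinite parallel flats along the axis, or alternatively by invoking a general criterion transferring acylindrical hyperbolicity from a parabolic subgroup to the ambient Artin-Tits group. Once such a rank-one element exists, the centerless conclusion is automatic: the center is contained in its virtually cyclic centralizer, and torsion-freeness of triangle-free Artin-Tits groups (a consequence of the free action on a finite-dimensional CAT(0) complex) then forces the center to be trivial.
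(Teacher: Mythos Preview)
Your outline for (i)$\Rightarrow$(ii)$\Rightarrow$(iii)$\Rightarrow$(iv)$\Rightarrow$(v) is essentially the paper's, with one small slip: in (i)$\Rightarrow$(ii) you invoke ``any non-trivial standard parabolic is infinite'', but the direct factors of a hypothetical splitting $A_\Gamma = H\times K$ need not be standard parabolics. The paper (and you, at the end) instead use torsion-freeness of $A_\Gamma$, which follows from the existence of the finite $K(A_\Gamma,1)$ given by Brady--McCammond; that is what actually forces both factors to be infinite.

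The real gap is in (v)$\Rightarrow$(i). Your plan is to take a WPD element of the parabolic $A_\Lambda$ coming from class (A2) or (A3) and ``promote'' it to a rank-one isometry of the ambient Brady--McCammond complex $\widetilde{K}_\Gamma$. Neither of the two mechanisms you propose for this works as stated. First, acylindrical hyperbolicity of $A_\Lambda$ via (A2)/(A3) is established on \emph{other} spaces (the extension graph, the Deligne complex), so you do not get a rank-one isometry on $\widetilde{K}_\Lambda$, let alone on $\widetilde{K}_\Gamma$, for free. Second, even granting convexity of $\widetilde{K}_\Lambda$ in $\widetilde{K}_\Gamma$, rank-oneness does not pass to superspaces: a flat half-plane bounded by the axis may live entirely outside the subcomplex. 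And the ``general criterion transferring acylindrical hyperbolicity from a parabolic to the ambient Artin--Tits group'' you allude to is precisely what is open; if it existed, the conjecture would be essentially settled.

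What the paper does instead is to write down an \emph{explicit} element in each case---$\alpha_{2,n}v_3v_1$ for the $2$-path $\Gamma_{m,n}$ with $n\ge 3$, and $x_1x_3$ for the $3$-path $\Gamma_{2,2,2}$---and verify rank-oneness directly in $\widetilde{K}_\Gamma$ by a link computation. Fullness of $\Lambda\subset\Gamma$ and triangle-freeness guarantee that any path in $L_\Gamma$ escaping $L_\Lambda$ between the relevant points has length $\ge\pi$, so the concatenation of $1$-cells is a local geodesic and lifts to an axis. The no-flat-half-plane step is then element-specific: assuming a flat, the unit semicircle at one axis vertex is forced to trace a particular length-$\pi$ path in $L_\Gamma$, which pins down the adjacent $2$-cells; translating to the next axis vertex, the required length-$\pi$ path through the now-determined point (e.g.\ through $x_1^-$ or $v_1^+$) simply does not exist in $L_\Gamma$. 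This contradiction is tailored to the chosen element and would not follow from any soft lifting argument.
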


The second case is the following:
\begin{theorem}\label{generalizations_thm}
Let $A_{\Gamma}$ be an Artin-Tits group of almost large type associated to $\Gamma$ with three or more vertices.
Suppose that $\Gamma$ can be appropriately directed. 
Then the following are equivalent.
\begin{itemize}
\item[(i)] $A_{\Gamma}$ is acylindrically hyperbolic.
\item[(ii)] $A_{\Gamma}$ is directly indecomposable.
\item[(iii)] $A_{\Gamma}$ is irreducible.
\item[(iv)] $\Gamma$ is not a cone over a graph consisting of only isolated vertices with all edges labeled by $2$.
\end{itemize}
Under either of (i)-(iv), and thus all of (i)-(iv),
$A_{\Gamma}$ is centerless. 
In particular, Conjecture~\ref{acyl_conj} is true for Artin-Tits groups of almost large type associated to graphs admitting appropriate directions.
\end{theorem}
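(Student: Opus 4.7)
The plan is to establish the equivalences via the cycle (i) $\Rightarrow$ (ii) $\Rightarrow$ (iii) $\Rightarrow$ (iv) $\Rightarrow$ (i), and then deduce centerlessness. The first three arrows should be essentially formal. For (i) $\Rightarrow$ (ii), an acylindrically hyperbolic group does not split as a direct product of two infinite subgroups, since elements of one factor commute with all elements of the other and thus force both factors into a common virtually cyclic elementary subgroup; torsion-freeness of $A_\Gamma$ (from its Brady--McCammond CAT(0) action) then yields direct indecomposability. The implication (ii) $\Rightarrow$ (iii) is the observation already made in the introduction. For (iii) $\Rightarrow$ (iv), the forbidden graph $\{v_0\} * \{v_1,\ldots,v_n\}$ with all edges labeled $2$ is a join of two non-empty subgraphs, so it is reducible.

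The substantive implication is (iv) $\Rightarrow$ (i). By Brady--McCammond \cite{MR1770639}, under the almost-large-type and appropriate-direction hypotheses $A_\Gamma$ acts geometrically on a CAT(0) complex $X$ (the modified Deligne complex). I would prove (i) by producing a rank-one (contracting) isometry of this action: a standard theorem of Sisto then implies $A_\Gamma$ is either virtually cyclic or acylindrically hyperbolic, and non-virtual-cyclicity under (iv) with $|V(\Gamma)|\geq 3$ is straightforward, since $A_\Gamma$ contains a non-abelian free subgroup coming from any dihedral Artin subgroup associated to a label $\geq 3$ edge, or a $\mathbb{Z}^2$ subgroup from a label-$2$ edge together with a non-adjacent vertex. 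The natural candidate is $g = st$ for a carefully chosen edge $\{s,t\}$ with label $\geq 3$. Rank-one verification reduces to analyzing the flat strips and half-planes in $X$ containing the axis of $g$, which in turn correspond to join-type configurations in $\Gamma$ involving the edge $\{s,t\}$; hypothesis (iv) is tailored to exclude precisely the pathological case in which no such edge admits a rank-one axis. Once (i) is established, centerlessness follows from the standard fact that a torsion-free acylindrically hyperbolic group has trivial center.

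The main obstacle will be the rank-one verification, which requires understanding the Brady--McCammond complex finely enough to control its flats via the combinatorics of $\Gamma$. The almost-large-type hypothesis is essential here: a label-$2$ edge incident to $\{s,t\}$ would contribute a centralizing flat parallel to the axis of $g$, destroying rank-one, while a label $\geq 3$ edge provides strictly hyperbolic local behavior. The delicate work consists in showing that any flat half-plane containing the axis of $g$ forces $\Gamma$ to split at $\{s,t\}$ as a join in a very specific way, and that under almost-large-type and (iv) this cannot happen for every admissible choice of $\{s,t\}$ simultaneously, so that a suitable $g$ can always be produced.
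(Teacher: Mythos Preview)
Your outline of the cycle (i) $\Rightarrow$ (ii) $\Rightarrow$ (iii) $\Rightarrow$ (iv) and of the centerlessness argument is fine and matches the paper. The gap is in (iv) $\Rightarrow$ (i), specifically in your choice of candidate rank-one element and your proposed method of verification.

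The element $g = st$ for an edge with label $m \geq 3$ is the generator $x_e$ in the Brady--McCammond presentation, and its axis is \emph{not} rank one: the dihedral Artin subgroup $\langle s,t\rangle$ already contains a $\mathbb{Z}^2$ (coming from its infinite cyclic center), and the corresponding flat in $\widetilde{K}_\Gamma$ contains the $x_e$-axis. Your heuristic that ``a label $\geq 3$ edge provides strictly hyperbolic local behavior'' is therefore incorrect for this element; the flat along the axis of $st$ exists regardless of how the rest of $\Gamma$ looks. The paper instead uses elements that mix generators coming from \emph{two different} edges of $\Gamma$: in the case where $\Gamma$ contains a $3$-cycle, the element is $\gamma = x_2\,\alpha_{1,3}$, built from the Brady--McCammond generator $x_2$ attached to one edge of the $3$-cycle and an auxiliary generator $\alpha_{1,3}$ attached to the adjacent edge. (In the triangle-free case, which you do not separate out, the paper reduces to Theorem~\ref{triangle-free_thm}, where the elements used are $\alpha_{2,n}v_3v_1$ or $x_1x_3$.)

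The method of verification is also different from what you sketch. Rather than attempting a global classification of flat half-planes via join decompositions of $\Gamma$, the paper works entirely in the link $L_\Gamma$ of the unique vertex of $K_\Gamma$: for the chosen loop $l$ one computes $d_{L_\Gamma}(l^+,l^-) > \pi$ directly from the combinatorics of $L_{\Gamma'}$ (together with a short argument bounding detours through $L_\Gamma \setminus L_{\Gamma'}$). A link distance strictly greater than $\pi$ immediately rules out any flat half-plane bounded by the axis, with no further analysis needed. Finally, the complex in question is the universal cover of the presentation $2$-complex for the Brady--McCammond presentation, not the modified Deligne complex.
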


\noindent Terminologies ``of almost large type'' and ``appropriately directed'' are defined in Section~\ref{triangle-free_sec}.
We note that graphs can contain $3$-cycles in the setting of Theorem~\ref{generalizations_thm}.
As a corollary of Theorem~\ref{generalizations_thm}, we have the following.

\begin{corollary}\label{square-free_thm}
Let $A_{\Gamma}$ be an Artin-Tits group associated to a cone over a square-free bipartite graph
$\Gamma=\{v_0\}\ast \Gamma'$.
Suppose that $\Gamma$ has a $3$-cycle subgraph. 
Let $\Gamma'_1$ be the possibly empty subgraph consisting of all isolated vertices in $\Gamma'$, 
and let $\Gamma'_2=\Gamma'-\Gamma'_1$.
Suppose further that every edge in $\Gamma'_2$ and every edge between $v_0$ and $\Gamma'_2$ are labeled by integers greater than $2$.
Then $A_{\Gamma}$ is acylindrically hyperbolic, directly indecomposable and centerless. 
In particular, Conjecture~\ref{acyl_conj} is true for such an $A_{\Gamma}$. 
\end{corollary}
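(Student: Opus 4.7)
The plan is to deduce Corollary~\ref{square-free_thm} as a direct application of Theorem~\ref{generalizations_thm} to $\Gamma=\{v_0\}\ast \Gamma'$. I need to verify three hypotheses: that $A_\Gamma$ is of almost large type, that $\Gamma$ admits an appropriate direction, and that condition (iv) of Theorem~\ref{generalizations_thm} holds, namely that $\Gamma$ is not a cone over isolated vertices with all labels $2$. Once these are established, Theorem~\ref{generalizations_thm} immediately delivers acylindrical hyperbolicity, direct indecomposability and centerlessness.

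Condition (iv) will be the easiest step. Since $\Gamma'$ is bipartite it is triangle-free, so any $3$-cycle of $\Gamma$ must pass through $v_0$ and use an edge of $\Gamma'$. By hypothesis $\Gamma$ contains at least one $3$-cycle, so $\Gamma'$ has at least one edge; such an edge lies in $\Gamma'_2$ by the definition of $\Gamma'_1$, and by hypothesis its label exceeds $2$. Hence $\Gamma$ is not a right-angled cone over isolated vertices.

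To check that $A_\Gamma$ is of almost large type, I would inventory the possible locations of $2$-labeled edges: they can occur only between $v_0$ and a vertex of $\Gamma'_1$, since edges inside $\Gamma'_2$ and edges between $v_0$ and $\Gamma'_2$ have label $>2$ by hypothesis, while the vertices of $\Gamma'_1$ are isolated in $\Gamma'$. In particular, every $3$-cycle of $\Gamma$ has the $\Gamma'$-edge of its triangle labeled $>2$. I expect this restricted pattern of $2$-labeled edges to match directly the definition of ``almost large type'' recorded in Section~\ref{triangle-free_sec}, so the verification should be a short combinatorial check.

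The step I expect to be the main obstacle is the construction of an appropriate direction on $\Gamma$. Using the bipartition $V(\Gamma')=X\sqcup Y$, the natural candidate orients every edge of $\Gamma'$ from its $X$-endpoint to its $Y$-endpoint, and orients each cone-edge as $v_0\to x$ for $x\in X$ and $y\to v_0$ for $y\in Y$. Under this choice, every $3$-cycle of $\Gamma$ has the form $(v_0,x,y)$ with $x\in X$, $y\in Y$ and $\{x,y\}\in E(\Gamma')$, and receives the consistently directed cycle $v_0\to x\to y\to v_0$. Square-freeness of $\Gamma'$ is what forbids $4$-cycles inside $\Gamma'$; combined with the bipartiteness this controls how distinct triangles through $v_0$ can interact, which I expect to yield the local compatibility conditions encoded in ``appropriately directed'' once those conditions are unpacked from Section~\ref{triangle-free_sec}. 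Unfolding the precise combinatorial conditions and checking them in the presence of many triangles sharing the apex $v_0$ is the technical heart of the argument; after that, Theorem~\ref{generalizations_thm} finishes the proof.
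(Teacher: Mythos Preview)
Your proposal is correct and follows essentially the same route as the paper: verify almost large type from the location of $2$-labeled edges, construct an appropriate direction via the bipartition of $\Gamma'$ (the paper colors $\Gamma'_2$ white/black and sends $v_0\to$ white, black $\to v_0$, white $\to$ black, leaving cone-edges to $\Gamma'_1$ arbitrary), and then invoke Theorem~\ref{generalizations_thm}. The only place where the paper is more explicit than your sketch is the $4$-cycle check: since $\Gamma'$ is square-free every $4$-cycle passes through $v_0$ and hence shares two consecutive edges with one of the directed $3$-cycles just constructed, which forces it to match the rightmost pattern in Figure~\ref{squares2_fig}; this is exactly the verification you anticipated.
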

	\begin{figure}
	\begin{center}
	\hspace{0cm}
	\scalebox{0.6}{
	\input{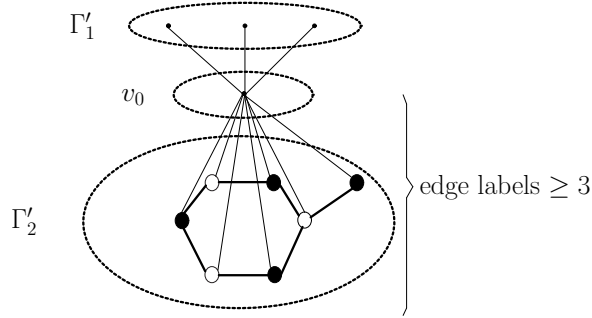}
	}
	\caption{The join of graphs $v_0$ and $\Gamma'_1\cup \Gamma'_2$.}\label{join_ex_fig}
	\end{center}
	\end{figure}
\noindent Figure~\ref{join_ex_fig} shows an example of $\Gamma$ in Corollary~\ref{square-free_thm}.

By Theorem~\ref{generalizations_thm}, we can also see that Conjecture~\ref{acyl_conj} is true for Artin-Tits groups of almost large type 
associated to square-free graphs (see Corollary~\ref{square-free_cor}).

\begin{remark}\label{trivial_rem}
Theorems~\ref{triangle-free_thm} and \ref{generalizations_thm} treated Artin-Tits groups $A_{\Gamma}$ such that $\Gamma$ has three or more vertices,
since Conjecture~\ref{acyl_conj} is true when $\Gamma$ has less than three vertices.
In fact, if $\Gamma$ has less than three vertices, it satisfies one of the following:
$(1)$ $\Gamma$ has only one vertex,
$(2)$ $\Gamma$ has exactly two vertices and no edges, and
$(3)$ $\Gamma$ has exactly two vertices and an edge.
In the first case, $A_{\Gamma}\cong \mathbb{Z}$, and thus the central quotient is trivial. 
In the second case, $A_{\Gamma}\cong \mathbb{F}_2$, which is hyperbolic. 
Since its center is trivial, the central quotient of $A_{\Gamma}$ is acylindrically hyperbolic.
In the third case, when the edge label is $2$, $A_{\Gamma}\cong \mathbb{Z}^2$, which is reducible.
When the edge label is greater than $2$, $A_{\Gamma}$ has an infinite cyclic center and its central quotient is $\mathbb{Z}/{m\mathbb{Z}}\ast \mathbb{Z}/2\mathbb{Z}$ for odd $m$ 
and $\mathbb{Z}/{\frac{m}{2}\mathbb{Z}}\ast \mathbb{Z}$ for even $m$ (see \cite{haettel2019xxl}).
These free products are virtually $\mathbb{F}_2$ and thus hyperbolic.
\end{remark}

We compare Artin-Tits groups in Theorem~\ref{triangle-free_thm} and Corollary~\ref{square-free_thm} with (A1)-(A4).
Our Artin-Tits groups are not necessarily of finite type, right-angled or with the associated Coxeter groups being hyperbolic.
A triangle-free Artin-Tits group is possibly associated to a join.
Also, a cone over a graph is a join. 
Figure~\ref{new_ex_fig} shows a triangle-free graph
and a cone over a square-free bipartite graph
such that associated Artin-Tits groups are not in (A1)-(A4).
	
	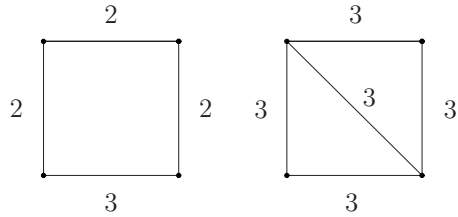
\begin{figure}
	\begin{center}
	\scalebox{0.7}{
{\unitlength 0.1in%
\begin{picture}(32.1000,14.0000)(3.1000,-15.3000)%
%
{\color[named]{Black}{%
\special{pn 8}%
\special{pa 3400 395}%
\special{pa 2400 395}%
\special{pa 2400 395}%
\special{pa 3400 395}%
\special{pa 3400 395}%
\special{pa 2400 395}%
\special{fp}%
}}%
%
{\color[named]{Black}{%
\special{pn 8}%
\special{pa 2400 395}%
\special{pa 2400 1395}%
\special{fp}%
\special{pa 2400 1395}%
\special{pa 3400 1395}%
\special{fp}%
\special{pa 3400 1395}%
\special{pa 3400 395}%
\special{fp}%
}}%
%
{\color[named]{Black}{%
\special{pn 8}%
\special{pa 3400 1395}%
\special{pa 2400 395}%
\special{fp}%
}}%
%
{\color[named]{Black}{%
\special{pn 4}%
\special{sh 1}%
\special{ar 3400 1395 16 16 0 6.2831853}%
\special{sh 1}%
\special{ar 2400 1395 16 16 0 6.2831853}%
\special{sh 1}%
\special{ar 2400 395 16 16 0 6.2831853}%
\special{sh 1}%
\special{ar 3400 395 16 16 0 6.2831853}%
\special{sh 1}%
\special{ar 3400 395 16 16 0 6.2831853}%
}}%
\put(29.1000,-1.9500){\makebox(0,0){{\color[named]{Black}{\Large $3$}}}}%
\put(36.1000,-9.0500){\makebox(0,0){{\color[named]{Black}{\Large $3$}}}}%
\put(30.1000,-8.1500){\makebox(0,0){{\color[named]{Black}{\Large $3$}}}}%
\put(22.1000,-9.0500){\makebox(0,0){{\color[named]{Black}{\Large $3$}}}}%
\put(28.8000,-15.9500){\makebox(0,0){{\color[named]{Black}{\Large $3$}}}}%
%
{\color[named]{Black}{%
\special{pn 8}%
\special{pa 1600 395}%
\special{pa 600 395}%
\special{pa 600 395}%
\special{pa 1600 395}%
\special{pa 1600 395}%
\special{pa 600 395}%
\special{fp}%
}}%
%
{\color[named]{Black}{%
\special{pn 8}%
\special{pa 600 395}%
\special{pa 600 1395}%
\special{fp}%
\special{pa 600 1395}%
\special{pa 1600 1395}%
\special{fp}%
\special{pa 1600 1395}%
\special{pa 1600 395}%
\special{fp}%
}}%
%
{\color[named]{Black}{%
\special{pn 4}%
\special{sh 1}%
\special{ar 1600 1395 16 16 0 6.2831853}%
\special{sh 1}%
\special{ar 600 1395 16 16 0 6.2831853}%
\special{sh 1}%
\special{ar 600 395 16 16 0 6.2831853}%
\special{sh 1}%
\special{ar 1600 395 16 16 0 6.2831853}%
\special{sh 1}%
\special{ar 1600 395 16 16 0 6.2831853}%
}}%
\put(11.0000,-1.9500){\makebox(0,0){{\color[named]{Black}{\Large $2$}}}}%
\put(11.0000,-15.9500){\makebox(0,0){{\color[named]{Black}{\Large $3$}}}}%
\put(4.0000,-8.9500){\makebox(0,0){{\color[named]{Black}{\Large $2$}}}}%
\put(18.0000,-8.9500){\makebox(0,0){{\color[named]{Black}{\Large $2$}}}}%
\end{picture}}%
	}
	\caption{Two examples of $\Gamma$ such that $A_{\Gamma}$ is not in (A1)-(A4).}\label{new_ex_fig}
	\end{center}
	\end{figure}

Finally, we consider one of the basic questions on algebraic properties of Artin-Tits groups.
When an irreducible Artin-Tits group is of finite type,
the center is known to be infinite cyclic (\cite{MR323910}, \cite{MR422673}).
For irreducible Artin-Tits groups of infinite type, it is conjectured that the center is trivial (\cite{Charney2008PROBLEMSRT}, \cite{MR3203644}).
When $\Gamma$ is not a cone, it is known that $A_{\Gamma}$ is centerless (\cite{MR3993762}).
Theorem~\ref{triangle-free_thm} and Theorem~\ref{generalizations_thm} give affirmative partial answers to
this conjecture.
In particular, Corollary~\ref{square-free_thm} claims that some Artin-Tits groups associated to cones are centerless.

We give an outline of this paper. 
Section~\ref{triangle-free_sec} contains preliminaries on acylindrically hyperbolic groups, Artin-Tits groups and Brady-McCammond's CAT(0) spaces.
Section~\ref{main_sec} contains proofs of Theorems~\ref{triangle-free_thm} and \ref{generalizations_thm}.
Our strategy is to answer the following problem: if an Artin-Tits group acts geometrically on a CAT(0) space, does it have a rank-one isometry?
Such a strategy is based on relations between rank one isometries on CAT(0) spaces and acylindrical hyperbolicity of groups (\cite{MR3415065}, \cite{MR3849623}), and was used in previous works on Artin-Tits groups (for example, \cite{haettel2019xxl}). 
In the proofs of the main theorems,
we observe geometric actions of Artin-Tits groups on CAT(0) spaces,
constructed by Brady and McCammond \cite{MR1770639}.
We detect group elements acting as rank one isometries on the CAT(0) spaces.

\section{Preliminaries}\label{triangle-free_sec}

\subsection{Acylindrically hyperbolic groups}
Hereafter, we always assume that group actions on metric spaces are by isometries.
We recall the definition of acylindrically hyperbolic groups.

\begin{definition}[\cite{MR2367021}, \cite{MR3430352}]
An isometric action of a group $G$ on a metric space $(X,d)$ is {\it acylindrical} if 
for every $\varepsilon\geq 0$, there exist $R\geq 0$ and $N\geq 0$ such that every $x$, $y\in X$ with $d(x,y)\geq R$ satisfy 
	\begin{align}
	|\{g\in G\mid d(x,gx)\leq \varepsilon, d(y,gy)\leq \varepsilon\}|\leq N.
	\end{align}
A group $G$ is {\it acylindrically hyperbolic} if $G$ acts acylindrically and non-elementarily on a (Gromov-) hyperbolic space.
\end{definition}
Examples and basic properties of acylindrically hyperbolic groups can be found in \cite{MR3430352}.

\begin{definition}[cf. \cite{MR1744486}]
An isometry $\gamma$ on a metric space $(X,d)$ is {\it hyperbolic} if there exists a point $x\in X$ satisfying 
$d(x,\gamma x)=\inf_{x'\in X}d(x',\gamma x') > 0$.
When $X$ is a CAT(0) space,
$\gamma$ is hyperbolic if and only if it acts by a translation on a geodesic line $l_{\gamma}$ in $X$.
We call $l_{\gamma}$ an {\it axis} of $\gamma$ (\cite[Theorem II-6.8]{MR1744486}).
$\gamma$ is {\it rank one} if it is hyperbolic and its axis does not bound a flat half plane.
\end{definition}

\begin{theorem}[{\cite[Theorem 1.3]{MR3849623}}]\label{acyl_thm}
If a group $G$ acts properly on a proper CAT(0) space with a rank one isometry, 
then $G$ is either virtually cyclic or acylindrically hyperbolic.
\end{theorem}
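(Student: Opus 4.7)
The plan is to combine the contracting-like geometry enjoyed by rank-one axes with Osin's characterization of acylindrically hyperbolic groups as those admitting a non-elementary action on a hyperbolic space with a WPD loxodromic element. Let $\gamma \in G$ be a rank-one isometry on the proper CAT(0) space $X$, with axis $\ell$ and boundary endpoints $\gamma^{\pm\infty} \in \partial X$.

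First I would record the key geometric feature of $\ell$: since $\ell$ does not bound a flat half-plane, a theorem of Ballmann implies that $\ell$ is \emph{contracting}, meaning the nearest-point projection $\pi_\ell\colon X \to \ell$ sends every metric ball disjoint from $\ell$ to a set of uniformly bounded diameter. Equivalently, $\gamma$ is a contracting element in the sense of Sisto/Yang, and the pair $(\gamma^{+\infty},\gamma^{-\infty})$ enjoys North-South dynamics on $X \cup \partial X$.

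Next I would set up a dichotomy on the setwise stabilizer $\mathrm{Stab}_G\{\gamma^{+\infty},\gamma^{-\infty}\}$. If this stabilizer equals all of $G$, then properness of the action on $X$ together with the quasi-line structure of $\ell$ (which the stabilizer coarsely preserves via the contracting projection) forces $G$ to be virtually cyclic. Otherwise, pick $h \in G$ moving the endpoint pair off itself; the conjugate $h\gamma h^{-1}$ is an independent rank-one isometry, and a ping-pong argument in $X \cup \partial X$, based on the North-South dynamics from the first step, yields a free subgroup $\langle \gamma^N, h\gamma^N h^{-1}\rangle \cong F_2$ for $N$ large.

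In this non-elementary case I would then promote the geometry of $\ell$ to a genuine action on a hyperbolic space. One route is to invoke Sisto's theorem that a proper action on a proper metric space containing a contracting element admits an induced action on a (typically non-proper) hyperbolic space in which the contracting element acts loxodromically and is WPD. An alternative is to feed the $G$-orbit of $\ell$ into the Bestvina--Bromberg--Fujiwara projection complex construction, verifying the projection axioms from the contracting property and obtaining a quasi-tree on which $\gamma$ is a WPD loxodromic. Osin's theorem then delivers the acylindrical hyperbolicity of $G$.

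The main obstacle is the verification of the WPD property in the previous step: one must show that elements $g \in G$ coarsely fixing a long segment of $\ell$ form a finite set. The contracting property forces any such $g$ to drag a whole neighborhood of that segment close to itself, and then properness of the $G$-action on $X$ bounds the number of such $g$. Balancing these two ingredients quantitatively is the technical heart of the argument and is precisely the content of the cited result.
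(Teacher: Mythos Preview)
The paper does not prove this statement at all: it is quoted verbatim as \cite[Theorem~1.3]{MR3849623} and then invoked as a black box in the proofs of Lemma~\ref{key1_lem} and Theorem~\ref{generalizations_thm}. So there is no ``paper's own proof'' to compare against; the authors simply cite Sisto.

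That said, your outline is a faithful sketch of how the cited theorem is actually established. The chain \emph{rank-one $\Rightarrow$ contracting axis (Ballmann/Bestvina--Fujiwara) $\Rightarrow$ projection complex or Sisto's construction $\Rightarrow$ hyperbolic space with a WPD loxodromic $\Rightarrow$ acylindrically hyperbolic via Osin} is exactly the standard route, and your identification of the WPD verification as the technical crux is accurate. The dichotomy you set up (stabilizer of the endpoint pair is all of $G$ versus not) is also the right way to isolate the virtually cyclic case. One small point: in the virtually cyclic branch you should be slightly more careful, since properness of the action plus coarse preservation of a quasi-line gives you a proper action on $\mathbb{R}$ only after noting that the contracting projection is coarsely equivariant for the stabilizer; this is routine but worth stating. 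Otherwise the proposal is correct and there is nothing in the present paper to contrast it with.
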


\subsection{Artin-Tits groups and Brady-McCammond's CAT(0) spaces}

Let $\Gamma$ be a finite simple graph with edges labeled by integers greater than $1$.
The associated Artin-Tits group $A_{\Gamma}$ is defined by the standard presentation (\ref{pres_standard_eq}).
A graph is said to be {\it directed} if every edge $e$ is identified with an ordered pair $(s_e,t_e)$ of endvertices.
When $\Gamma$ is directed, $A_{\Gamma}$ admits another presentation.

\begin{lemma}[{\cite[Section 5, Definition ($G_{\Gamma}$)]{MR1770639}}]\label{pres_gen_lem}
Let $\Gamma$ be a finite simple directed graph with 
the vertex set $V(\Gamma)$ and the edge set $E(\Gamma)$.
Suppose that edges $e$ are labeled by integers $m(e)>1$.
Then $A_{\Gamma}$ admits a presentation with the generating set
	\begin{align}\label{generators_eq}
	V(\Gamma)\cup \{x_e\}_{e\in E(\Gamma)}\cup \{\alpha_{e,3}, \ldots, \alpha_{e,m(e)}\}_{e\in E(\Gamma), m(e)\geq 3}
	\end{align}
and relations
	\begin{align}\label{pres_BM_eq}
	x_{e}=s_{e} t_{e}, x_{e}=t_{e} s_{e} 
	\end{align}
for every $e\in E(\Gamma)$ with $m(e)=2$, and 
	\begin{align}\label{pres_BM_eq_2}
	x_{e}=s_{e} t_{e}, x_{e} = t_{e}\alpha_{e,3}, \ldots, 
	x_{e}=\alpha_{e,i}\alpha_{e,i+1}, \ldots, x_{e}=\alpha_{e,m(e)}s_{e}
	\end{align}
for every $e\in E(\Gamma)$ with $m(e)\geq 3$.
\end{lemma}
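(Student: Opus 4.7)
The plan is to verify the presentation by a sequence of Tietze transformations starting from the standard presentation~(\ref{pres_standard_eq}). Throughout, for a fixed edge $e$ it is convenient to rename $a_1:=s_e$, $a_2:=t_e$, and $a_i:=\alpha_{e,i}$ for $3\leq i\leq m(e)$, so that the relations in (\ref{pres_BM_eq}) and (\ref{pres_BM_eq_2}) read uniformly $a_ia_{i+1}=x_e$ for $i=1,\dots,m(e)$, with the cyclic convention $a_{m(e)+1}:=a_1$.

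First I would enlarge the standard presentation by adjoining the new generators $x_e$ and $\alpha_{e,3},\dots,\alpha_{e,m(e)}$ together with their defining equations as words in $V(\Gamma)$: set $x_e:=s_et_e$ and, starting from $\alpha_{e,2}:=t_e$, define inductively $\alpha_{e,i+1}:=\alpha_{e,i}^{-1}s_et_e$. This is a purely formal Tietze move. Under these definitions every equation in (\ref{pres_BM_eq}) and (\ref{pres_BM_eq_2}) is a tautology except for the very last ``closing'' equation $x_e=\alpha_{e,m(e)}s_e$ (respectively $x_e=t_es_e$ when $m(e)=2$), which is precisely $a_{m(e)}a_1=x_e$. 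It therefore suffices to show that, modulo the tautological equations, this closing equation is equivalent to the Artin-Tits braid relation of length $m(e)$; once this is established, a Tietze exchange swaps one for the other, and deleting the defining equations of $x_e$ and the $\alpha_{e,i}$ leaves exactly the presentation in the statement.

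The equivalence rests on the following observation. From $a_ia_{i+1}=a_{i+1}a_{i+2}=x_e$ one obtains $a_{i+2}=x_e^{-1}a_ix_e$, and iterating gives $a_{1+2k}=x_e^{-k}a_1x_e^k$ for every $k\geq 0$. If $m(e)=2k$, substituting into the cyclic closure $a_{m(e)+1}=a_1$ yields the commutation $x_e^ka_1=a_1x_e^k$; writing $x_e=a_1a_2$ and cancelling a leftmost $a_1$ on both sides produces $\underbrace{a_2a_1a_2a_1\cdots}_{m(e)\text{ letters}}=\underbrace{a_1a_2a_1a_2\cdots}_{m(e)\text{ letters}}$. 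If instead $m(e)=2k+1$, the cyclic closure combined with $a_{m(e)}=x_e^{-k}a_1x_e^k$ gives $a_1x_e^ka_1=x_e^{k+1}$, and the analogous rewriting and cancellation produces $\underbrace{a_1a_2a_1\cdots}_{m(e)\text{ letters}}=\underbrace{a_2a_1a_2\cdots}_{m(e)\text{ letters}}$. In either case this is exactly the braid relation indexed by $e$, and each step of the derivation is reversible.

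I do not expect a genuine conceptual obstacle; the only real care is the parity-dependent rewriting in the third paragraph. Morally, the Brady-McCammond presentation simply subdivides the length-$m(e)$ braid relation into $m(e)$ length-two relations sharing the common value $x_e$, together with names $\alpha_{e,i}$ for the intermediate ``midpoints'' of the braid word, and the Tietze verification is essentially an exercise in telescoping these midpoints.
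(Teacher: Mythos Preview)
The paper does not prove this lemma; it is stated as a citation from Brady--McCammond \cite{MR1770639} and no argument is given. Your Tietze-transformation verification is correct and is the natural way to see the equivalence. One minor expository slip: at the end of your second paragraph you speak of ``deleting the defining equations of $x_e$ and the $\alpha_{e,i}$'', but those defining equations are (after rearrangement) exactly the relations $a_ia_{i+1}=x_e$ for $i=1,\dots,m(e)-1$ that you want to \emph{keep}; once the Tietze exchange has replaced the braid relation by the closing relation $a_{m(e)}a_1=x_e$, you already have precisely the presentation in the statement, and nothing further needs to be removed.
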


Let $K_{\Gamma}$ be the presentation complex associated to the presentation of $A_{\Gamma}$ in Lemma~\ref{pres_gen_lem}.
$K_{\Gamma}$ has a unique vertex $o$, a directed 1-cell for each generator and a 2-cell for each relation in (\ref{pres_BM_eq}) and (\ref{pres_BM_eq_2}).
We denote by $\mathrm{Pr}:\widetilde{K}_{\Gamma}\to K_{\Gamma}$ the projection from the universal cover $\widetilde{K}_{\Gamma}$ onto $K_{\Gamma}$.
The $1$-skeleton of $\widetilde{K}_{\Gamma}$ can be identified with the Cayley graph of $A_{\Gamma}$ on the generators (\ref{generators_eq}).
We fix such an identification, 
and let $\widetilde{o}\in \widetilde{K}_{\Gamma}$ be the vertex corresponding to the identity element of $A_{\Gamma}$.
Figure~\ref{2-cells_fig} shows $2$-cells in the universal cover $\widetilde{K}_{\Gamma}$ of $K_{\Gamma}$.
	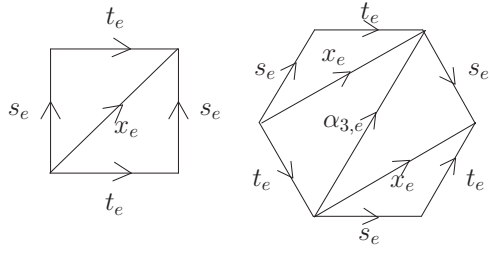
\begin{figure}
	\begin{center}
	\scalebox{0.7}{
{\unitlength 0.1in%
\begin{picture}(34.6400,16.7000)(3.2600,-17.7000)%
%
\special{pn 8}%
\special{pa 647 1363}%
\special{pa 647 437}%
\special{fp}%
\special{pa 647 437}%
\special{pa 1593 437}%
\special{fp}%
\special{pa 1593 437}%
\special{pa 1593 1363}%
\special{fp}%
\special{pa 1593 1363}%
\special{pa 647 1363}%
\special{fp}%
%
\special{pn 8}%
\special{pa 647 831}%
\special{pa 576 958}%
\special{fp}%
\special{pa 647 831}%
\special{pa 718 958}%
\special{fp}%
%
\special{pn 8}%
\special{pa 1593 831}%
\special{pa 1522 958}%
\special{fp}%
\special{pa 1593 831}%
\special{pa 1664 958}%
\special{fp}%
%
\special{pn 8}%
\special{pa 1244 1363}%
\special{pa 1120 1282}%
\special{fp}%
\special{pa 1244 1363}%
\special{pa 1108 1421}%
\special{fp}%
%
\special{pn 8}%
\special{pa 1238 437}%
\special{pa 1114 356}%
\special{fp}%
\special{pa 1238 437}%
\special{pa 1102 495}%
\special{fp}%
\put(11.2000,-15.9500){\makebox(0,0){\Large $t_e$}}%
\put(4.1100,-9.0000){\makebox(0,0){\Large $s_e$}}%
\put(18.2900,-9.0000){\makebox(0,0){\Large $s_e$}}%
\put(11.2600,-2.0500){\makebox(0,0){\Large $t_e$}}%
%
\special{pn 8}%
\special{pa 641 1363}%
\special{pa 1587 437}%
\special{fp}%
%
\special{pn 8}%
\special{pa 1180 855}%
\special{pa 1034 880}%
\special{fp}%
\special{pa 1180 855}%
\special{pa 1124 989}%
\special{fp}%
\put(12.0900,-10.3900){\makebox(0,0){\Large $x_e$}}%
%
\special{pn 8}%
\special{pa 3790 995}%
\special{pa 3400 295}%
\special{fp}%
%
\special{pn 8}%
\special{pa 3400 295}%
\special{pa 2600 295}%
\special{fp}%
%
\special{pn 8}%
\special{pa 2600 295}%
\special{pa 2190 985}%
\special{fp}%
%
\special{pn 8}%
\special{pa 2190 985}%
\special{pa 2590 1685}%
\special{fp}%
%
\special{pn 8}%
\special{pa 2590 1685}%
\special{pa 3390 1685}%
\special{fp}%
\special{pa 3390 1685}%
\special{pa 3790 985}%
\special{fp}%
%
\special{pn 8}%
\special{pa 2463 541}%
\special{pa 2350 596}%
\special{fp}%
\special{pa 2463 541}%
\special{pa 2449 665}%
\special{fp}%
%
\special{pn 8}%
\special{pa 3070 295}%
\special{pa 2965 225}%
\special{fp}%
\special{pa 3070 295}%
\special{pa 2955 345}%
\special{fp}%
%
\special{pn 8}%
\special{pa 3070 1695}%
\special{pa 2965 1625}%
\special{fp}%
\special{pa 3070 1695}%
\special{pa 2955 1745}%
\special{fp}%
%
\special{pn 8}%
\special{pa 3643 1251}%
\special{pa 3530 1306}%
\special{fp}%
\special{pa 3643 1251}%
\special{pa 3629 1375}%
\special{fp}%
%
\special{pn 8}%
\special{pa 3639 699}%
\special{pa 3526 644}%
\special{fp}%
\special{pa 3639 699}%
\special{pa 3625 575}%
\special{fp}%
%
\special{pn 8}%
\special{pa 2439 1399}%
\special{pa 2326 1344}%
\special{fp}%
\special{pa 2439 1399}%
\special{pa 2425 1275}%
\special{fp}%
%
\special{pn 8}%
\special{pa 2209 989}%
\special{pa 3419 299}%
\special{fp}%
%
\special{pn 8}%
\special{pa 3419 299}%
\special{pa 2599 1689}%
\special{fp}%
%
\special{pn 8}%
\special{pa 2599 1689}%
\special{pa 3789 989}%
\special{fp}%
%
\special{pn 8}%
\special{pa 2862 608}%
\special{pa 2736 616}%
\special{fp}%
\special{pa 2862 608}%
\special{pa 2802 717}%
\special{fp}%
%
\special{pn 8}%
\special{pa 3053 911}%
\special{pa 2940 966}%
\special{fp}%
\special{pa 3053 911}%
\special{pa 3039 1035}%
\special{fp}%
%
\special{pn 8}%
\special{pa 3302 1268}%
\special{pa 3176 1276}%
\special{fp}%
\special{pa 3302 1268}%
\special{pa 3242 1377}%
\special{fp}%
\put(27.4000,-4.9500){\makebox(0,0){\Large $x_e$}}%
\put(32.5000,-14.3500){\makebox(0,0){\Large $x_e$}}%
\put(22.3500,-6.0500){\makebox(0,0){\Large $s_e$}}%
\put(30.1000,-1.6500){\makebox(0,0){\Large $t_e$}}%
\put(30.0500,-18.3500){\makebox(0,0){\Large $s_e$}}%
\put(22.1000,-14.1500){\makebox(0,0){\Large $t_e$}}%
\put(37.9000,-14.1500){\makebox(0,0){\Large $t_e$}}%
\put(38.1500,-6.0500){\makebox(0,0){\Large $s_e$}}%
\put(28.1500,-10.0500){\makebox(0,0){\Large $\alpha_{3,e}$}}%
\end{picture}}%
	}
	\caption{$2$-cells in $\widetilde{K}_{\Gamma}$ corresponding to edges of $\Gamma$ labeled by $2$ and $3$.}\label{2-cells_fig}
	\end{center}
	\end{figure}
In \cite{MR1770639}, Brady and McCammond showed that 
$\widetilde{K}_{\Gamma}$ can be given a metric to be an $A_{\Gamma}$-equivariant CAT(0) space 
under some combinatorial assumptions on $\Gamma$.

Let us consider two families of Artin-Tits groups.
The first one is the family of triangle-free Artin-Tits groups.
\begin{theorem}[{\cite[Theorem 6]{MR1770639}}]\label{CAT(0)_thm}
Let $A_{\Gamma}$ be a triangle-free Artin-Tits group.
Let us assign $\Gamma$ an arbitrary direction.
Then $\widetilde{K}_{\Gamma}$ has a metric satisfying the following:
\begin{itemize}
\item all $2$-cells are isometric to a Euclidean isosceles right triangle, and
\item all $1$-cells corresponding to generators in $\{x_e\}_{e\in E(\Gamma)}$ are the longest and of length $3\sqrt{2}$.
\end{itemize}
Moreover, $\widetilde{K}_{\Gamma}$ with this metric is a proper CAT(0) space.
The action of $A_{\Gamma}$ on $\widetilde{K}_{\Gamma}$ is geometric.

	\begin{figure}
	\begin{center}
	\scalebox{0.7}{
	\input{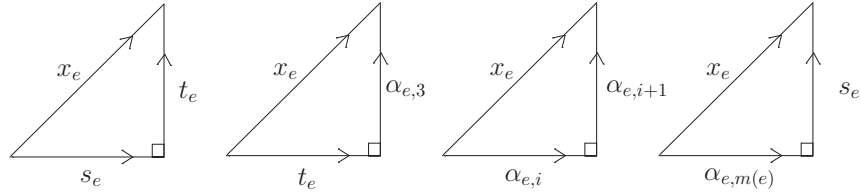}
	}
	\caption{$2$-cells of $\widetilde{K}_{\Gamma}$ isometric to an Euclidean isosceles right triangle.}\label{cells_isocseles_fig}
	\end{center}
	\end{figure}

\end{theorem}

Figure~\ref{cells_isocseles_fig} shows the identification of $2$-cells with Euclidean triangles in Theorem~\ref{CAT(0)_thm}.

The second one is the family of Artin-Tits groups of almost large type associated to graphs admitting appropriate directions. 
Here, an Artin-Tits group $A_{\Gamma}$ is said to be of {\it almost large type}
if the defining graph $\Gamma$ satisfies the following two conditions.
	\begin{itemize}
	\item[(1)] For every $3$-cycle in $\Gamma$, all edges are labeled by integers greater than $2$.
	\item[(2)] For every $4$-cycle in $\Gamma$, at least two edges are labeled by integers greater than $2$.
	\end{itemize}
In addition, we say that such a $\Gamma$ admits an {\it appropriate direction} or can be {\it appropriately directed} if $\Gamma$ can be directed such that each $3$- (resp.\ $4$-) cycle is directed in the same way as one of $3$- (resp.\ $4$-) cycles in Figure~\ref{squares2_fig}. 
We note that $\Gamma$ can admit $3$-cycles.

\begin{theorem}[{\cite[Theorem 7 and Remark on page 9]{MR1770639}}]\label{generalizations_CAT(0)_thm}
Let $A_\Gamma$ be an Artin-Tits group of almost large type and $\Gamma$ admit an appropriate direction.
Let us assign $\Gamma$ an appropriate direction.
Then $\widetilde{K}_{\Gamma}$ has a metric such that
all $2$-cells are isometric to a Euclidean equilateral triangle with side length $3$.
Moreover, $\widetilde{K}_{\Gamma}$ with this metric is a proper CAT(0) space.
The action of $A_{\Gamma}$ on $\widetilde{K}_{\Gamma}$ is geometric.
\end{theorem}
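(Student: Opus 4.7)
I would equip each triangular $2$-cell of the Brady-McCammond presentation complex $K_\Gamma$ (as specified by Lemma~\ref{pres_gen_lem}) with the metric of a Euclidean equilateral triangle of side $3$. The universal cover $\widetilde{K}_\Gamma$ then inherits a complete, locally finite, piecewise Euclidean structure on which $A_\Gamma$ acts freely, cocompactly, and by combinatorial isometries. Once the CAT(0) property of $\widetilde{K}_\Gamma$ is established, properness and geometricity of the action follow immediately.

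To prove CAT(0), the plan is to combine the Cartan-Hadamard theorem with Gromov's link condition. Since $\widetilde{K}_\Gamma$ is simply connected, it suffices to show that $K_\Gamma$ is locally CAT(0); since there is a unique vertex $o$ and every other point already has a flat neighborhood, this reduces to the assertion that $\mathrm{Lk}(o, K_\Gamma)$ is a CAT(1) metric graph. Each corner of an equilateral triangle contributes an edge of length $\pi/3$ to this link, so the CAT(1) condition becomes the combinatorial statement that $\mathrm{Lk}(o, K_\Gamma)$ has girth at least $6$.

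The core of the argument is this girth estimate. The vertices of $\mathrm{Lk}(o, K_\Gamma)$ are in bijection with the oriented ends of the generators listed in (\ref{generators_eq}), and its edges are the corners of the triangular $2$-cells coming from (\ref{pres_BM_eq}) and (\ref{pres_BM_eq_2}). Reading off the generators and the in/out orientations encountered around a loop in $\mathrm{Lk}(o, K_\Gamma)$ converts such a loop into a short closed edge-cycle of $\Gamma$ together with orientation data at each of its edges. I would then check by cases that a link loop of length $3$ forces a $3$-cycle of $\Gamma$ with at least one edge labelled $2$; a link loop of length $4$ forces a $4$-cycle of $\Gamma$ with at most one edge labelled strictly greater than $2$; and a link loop of length $5$ forces a $3$- or $4$-cycle of $\Gamma$ whose vertex-by-vertex orientation does not match any of the admissible patterns in Figure~\ref{squares2_fig}. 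The two defining conditions of almost large type exclude the first two possibilities, and the existence of an appropriate direction on $\Gamma$ excludes the third.

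The main obstacle is precisely this last case: the length-$5$ analysis cannot be settled on the basis of edge labels alone, and requires a careful comparison between the orientation pattern produced by the link loop and each of the admissible templates in Figure~\ref{squares2_fig}. Once this combinatorial verification is carried out, the girth-$6$ bound gives local CAT(0) at $o$, Cartan-Hadamard upgrades this to CAT(0) on $\widetilde{K}_\Gamma$, and the statement about the action is a direct consequence of the construction.
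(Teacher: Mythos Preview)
The paper does not give its own proof of this theorem: it is quoted verbatim from Brady--McCammond \cite{MR1770639}, and the surrounding text only records the key mechanism, namely that with the equilateral metric every link edge has length $\pi/3$ and $L_\Gamma$ contains no nontrivial loop of length less than $2\pi$. Your outline follows precisely this route (Cartan--Hadamard plus Gromov's link condition, reducing everything to a girth-$6$ bound on $L_\Gamma$), which is indeed Brady--McCammond's argument and is consistent with what the paper sketches.

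One caution about your case analysis: the correspondence you describe between short link loops and short cycles in $\Gamma$ is not quite as clean as you state it. The link $L_\Gamma$ also contains the vertices $x_e^{\pm}$ and $\alpha_{e,i}^{\pm}$, and a short link loop will typically pass through some of these rather than staying among the $v^{\pm}$. In particular, it is not true that a link $3$-cycle simply ``forces a $3$-cycle of $\Gamma$ with at least one edge labelled $2$'', nor that length-$4$ and length-$5$ link loops translate so directly. In Brady--McCammond the actual verification proceeds by enumerating the possible short closed edge-paths in $L_\Gamma$ itself (using the explicit incidences produced by the relations $x_e = uv$: each contributes edges $x_e^- \,$--$\, u^-$, $u^+ \,$--$\, v^-$, $v^+ \,$--$\, x_e^+$), and then showing that the almost-large-type conditions together with the orientation templates of Figure~\ref{squares2_fig} exclude every pattern of combinatorial length $\leq 5$. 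So your strategy is right, but the combinatorial bookkeeping in the last paragraph should be redone at the level of $L_\Gamma$ rather than at the level of cycles in $\Gamma$.
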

	
	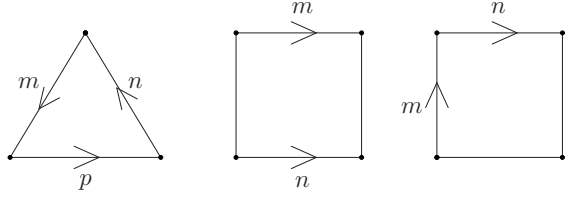
\begin{figure}
	\begin{center}
	\hspace{0cm}
	\scalebox{0.65}{
{\unitlength 0.1in%
\begin{picture}(44.0000,14.0000)(4.0000,-17.3500)%
%
{\color[named]{Black}{%
\special{pn 8}%
\special{pa 400 1600}%
\special{pa 1600 1600}%
\special{fp}%
\special{pa 1600 1600}%
\special{pa 1000 600}%
\special{fp}%
\special{pa 1000 600}%
\special{pa 400 1600}%
\special{fp}%
}}%
%
{\color[named]{Black}{%
\special{pn 8}%
\special{pa 2200 1600}%
\special{pa 2200 600}%
\special{fp}%
\special{pa 2200 600}%
\special{pa 3200 600}%
\special{fp}%
\special{pa 3200 600}%
\special{pa 3200 1600}%
\special{fp}%
\special{pa 3200 1600}%
\special{pa 2200 1600}%
\special{fp}%
}}%
%
{\color[named]{Black}{%
\special{pn 4}%
\special{sh 1}%
\special{ar 2200 1600 16 16 0 6.2831853}%
\special{sh 1}%
\special{ar 3200 1600 16 16 0 6.2831853}%
\special{sh 1}%
\special{ar 3200 600 16 16 0 6.2831853}%
\special{sh 1}%
\special{ar 2200 600 16 16 0 6.2831853}%
\special{sh 1}%
\special{ar 1600 1600 16 16 0 6.2831853}%
\special{sh 1}%
\special{ar 400 1600 16 16 0 6.2831853}%
\special{sh 1}%
\special{ar 1000 600 16 16 0 6.2831853}%
\special{sh 1}%
\special{ar 1000 600 16 16 0 6.2831853}%
\special{sh 1}%
\special{ar 1000 600 16 16 0 6.2831853}%
}}%
%
{\color[named]{Black}{%
\special{pn 8}%
\special{pa 1250 1042}%
\special{pa 1290 1216}%
\special{fp}%
\special{pa 1414 1131}%
\special{pa 1250 1042}%
\special{fp}%
}}%
%
{\color[named]{Black}{%
\special{pn 8}%
\special{pa 630 1210}%
\special{pa 670 1036}%
\special{fp}%
\special{pa 794 1121}%
\special{pa 630 1210}%
\special{fp}%
}}%
%
{\color[named]{Black}{%
\special{pn 8}%
\special{pa 1110 1600}%
\special{pa 910 1510}%
\special{fp}%
\special{pa 910 1690}%
\special{pa 1110 1600}%
\special{fp}%
}}%
%
{\color[named]{Black}{%
\special{pn 8}%
\special{pa 2840 1600}%
\special{pa 2640 1510}%
\special{fp}%
\special{pa 2640 1690}%
\special{pa 2840 1600}%
\special{fp}%
}}%
%
{\color[named]{Black}{%
\special{pn 8}%
\special{pa 2840 600}%
\special{pa 2640 510}%
\special{fp}%
\special{pa 2640 690}%
\special{pa 2840 600}%
\special{fp}%
}}%
%
{\color[named]{Black}{%
\special{pn 8}%
\special{pa 3800 1600}%
\special{pa 3800 600}%
\special{fp}%
\special{pa 3800 600}%
\special{pa 4800 600}%
\special{fp}%
\special{pa 4800 600}%
\special{pa 4800 1600}%
\special{fp}%
\special{pa 4800 1600}%
\special{pa 3800 1600}%
\special{fp}%
}}%
%
{\color[named]{Black}{%
\special{pn 8}%
\special{pa 3800 1000}%
\special{pa 3710 1200}%
\special{fp}%
\special{pa 3890 1200}%
\special{pa 3800 1000}%
\special{fp}%
}}%
%
{\color[named]{Black}{%
\special{pn 8}%
\special{pa 4440 600}%
\special{pa 4240 510}%
\special{fp}%
\special{pa 4240 690}%
\special{pa 4440 600}%
\special{fp}%
}}%
%
{\color[named]{Black}{%
\special{pn 4}%
\special{sh 1}%
\special{ar 3800 600 16 16 0 6.2831853}%
\special{sh 1}%
\special{ar 4800 600 16 16 0 6.2831853}%
\special{sh 1}%
\special{ar 4800 1600 16 16 0 6.2831853}%
\special{sh 1}%
\special{ar 3800 1600 16 16 0 6.2831853}%
\special{sh 1}%
\special{ar 3800 1600 16 16 0 6.2831853}%
}}%
\put(5.5000,-10.0000){\makebox(0,0){{\color[named]{Black}{\Large $m$}}}}%
\put(14.0000,-10.0000){\makebox(0,0){{\color[named]{Black}{\Large $n$}}}}%
\put(10.0000,-18.0000){\makebox(0,0){{\color[named]{Black}{\Large $p$}}}}%
\put(27.3000,-18.0000){\makebox(0,0){{\color[named]{Black}{\Large $n$}}}}%
\put(27.3000,-4.0000){\makebox(0,0){{\color[named]{Black}{\Large $m$}}}}%
\put(36.0000,-12.0000){\makebox(0,0){{\color[named]{Black}{\Large $m$}}}}%
\put(42.9000,-4.0000){\makebox(0,0){{\color[named]{Black}{\Large $n$}}}}%
\end{picture}}%
	}
	\caption{Directed $3$-cycles and $4$-cycles. Labels $m$, $n$, $p$ are greater than $2$. Unlabeled undirected edges can admit any label greater than $1$ and any direction.}\label{squares2_fig}
	\end{center}
	\end{figure}


In Theorems~\ref{CAT(0)_thm} and \ref{generalizations_CAT(0)_thm}, we assign $K_{\Gamma}$ a metric $d_{K_{\Gamma}}$
such that it is locally isometric to $\widetilde{K}_{\Gamma}$.
We often observe {\it the link} $L_{\Gamma}=\{z\in K_{\Gamma}\mid d_{K_{\Gamma}}(o,z)=1\}$ of the unique vertex $o$ in $K_{\Gamma}$.
Note that $L_{\Gamma}$ is regarded as a graph.
Indeed, each $1$-cell of $K_{\Gamma}$ corresponds to two vertices of $L_{\Gamma}$,
and each corner of a $2$-cell of $K_{\Gamma}$ corresponds to an edge of $L_{\Gamma}$.
We assign $L_{\Gamma}$ the path metric induced by the metric of $K_{\Gamma}$.
Then, the distance between adjacent vertices of $L_{\Gamma}$ is the angle between corresponding $1$-cells at $o$ in $K_{\Gamma}$.

We consider the setting of Theorem~\ref{CAT(0)_thm}.
For every directed $1$-cell $c$ of $K_{\Gamma}$, two intersection points with $L_{\Gamma}$ are named $c^-$ and $c^+$ in order, see Figure~\ref{cell_fig}.
We draw $L_{\Gamma}$ following \cite{MR1770639}, see Figure~\ref{link_fig}.
In $L_{\Gamma}$, every edge connected to a vertex $x_{e}^{+}$ or $x_{e}^{-}$ (a ``top'' or ``bottom'' edge in Figure~\ref{link_fig})
is of length $\pi/4$. 
The other edges (``middle'' edges in Figure~\ref{link_fig}) are of length $\pi/2$.
By noting that $\Gamma$ is triangle-free, we can confirm that $L_{\Gamma}$ does not contain non-trivial loops of length less than $2\pi$.
This fact is a key ingredient of the proof of Theorem~\ref{CAT(0)_thm} (\cite[Theorem 6]{MR1770639}).

Under the setting of Theorem~\ref{generalizations_CAT(0)_thm}, we can discuss everything in a similar way.
We note that all the edges of $L_{\Gamma}$ are of length $\pi/3$.

	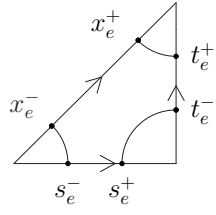
\begin{figure}
	\begin{center}
	\scalebox{0.7}{
{\unitlength 0.1in%
\begin{picture}(13.9000,13.3500)(2.7000,-15.3500)%
%
\special{pn 8}%
\special{pa 400 1400}%
\special{pa 1600 1400}%
\special{fp}%
\special{pa 1600 1400}%
\special{pa 1600 400}%
\special{fp}%
\special{pa 1600 400}%
\special{pa 1600 200}%
\special{fp}%
\special{pa 1600 200}%
\special{pa 400 1400}%
\special{fp}%
%
\special{pn 8}%
\special{pa 1150 1400}%
\special{pa 1040 1340}%
\special{fp}%
\special{pa 1040 1460}%
\special{pa 1150 1400}%
\special{fp}%
%
\special{pn 8}%
\special{pa 1600 820}%
\special{pa 1660 930}%
\special{fp}%
\special{pa 1540 930}%
\special{pa 1600 820}%
\special{fp}%
%
\special{pn 8}%
\special{pa 1050 755}%
\special{pa 930 790}%
\special{fp}%
\special{pa 1015 875}%
\special{pa 1050 755}%
\special{fp}%
\put(4.8000,-9.6000){\makebox(0,0){\Large $x_e^-$}}%
%
\special{pn 8}%
\special{ar 1600 200 400 400 1.5707963 2.3561945}%
%
\special{pn 8}%
\special{ar 1600 1400 400 400 3.1415927 4.7123890}%
%
\special{pn 8}%
\special{ar 400 1400 400 400 5.4977871 6.2831853}%
%
\special{pn 4}%
\special{sh 1}%
\special{ar 800 1400 16 16 0 6.2831853}%
\special{sh 1}%
\special{ar 680 1120 16 16 0 6.2831853}%
\special{sh 1}%
\special{ar 680 1120 16 16 0 6.2831853}%
%
\special{pn 4}%
\special{sh 1}%
\special{ar 1200 1400 16 16 0 6.2831853}%
\special{sh 1}%
\special{ar 1600 1000 16 16 0 6.2831853}%
\special{sh 1}%
\special{ar 1600 1000 16 16 0 6.2831853}%
%
\special{pn 4}%
\special{sh 1}%
\special{ar 1600 600 16 16 0 6.2831853}%
\special{sh 1}%
\special{ar 1320 480 16 16 0 6.2831853}%
\special{sh 1}%
\special{ar 1320 480 16 16 0 6.2831853}%
\put(10.8000,-3.6000){\makebox(0,0){\Large $x_e^+$}}%
\put(18.0000,-6.0000){\makebox(0,0){\Large $t_e^+$}}%
\put(18.0000,-10.0000){\makebox(0,0){\Large $t_e^-$}}%
\put(8.0000,-16.0000){\makebox(0,0){\Large $s_e^-$}}%
\put(12.0000,-16.0000){\makebox(0,0){\Large $s_e^+$}}%
\end{picture}}%
	}
	\caption{Intersection points of $1$-cells $x_e$, $s_e$ and $t_e$ with $L_{\Gamma}$.}
	\label{cell_fig}
	\end{center}
	\end{figure}
	\begin{figure}
	\begin{center}
	\scalebox{0.8}{
{\unitlength 0.1in%
\begin{picture}(29.5000,20.8000)(4.5000,-22.8000)%
\put(20.0000,-2.6500){\makebox(0,0){{\color[named]{Black}{\large $x_e^{+}$}}}}%
\put(20.1000,-23.4500){\makebox(0,0){{\color[named]{Black}{\large $x_e^{-}$}}}}%
\put(14.0000,-6.2500){\makebox(0,0){{\color[named]{Black}{\large $s_e^{+}$}}}}%
\put(14.0000,-20.2500){\makebox(0,0){{\color[named]{Black}{\large $s_e^{-}$}}}}%
%
{\color[named]{Black}{%
\special{pn 8}%
\special{pa 2010 400}%
\special{pa 2010 400}%
\special{fp}%
}}%
%
{\color[named]{Black}{%
\special{pn 8}%
\special{pa 2010 400}%
\special{pa 2610 800}%
\special{fp}%
\special{pa 2010 400}%
\special{pa 1410 800}%
\special{fp}%
}}%
%
{\color[named]{Black}{%
\special{pn 8}%
\special{pa 2210 800}%
\special{pa 2010 400}%
\special{fp}%
}}%
%
{\color[named]{Black}{%
\special{pn 8}%
\special{pa 2010 400}%
\special{pa 1810 800}%
\special{fp}%
}}%
%
{\color[named]{Black}{%
\special{pn 8}%
\special{pa 2010 2180}%
\special{pa 2010 2180}%
\special{fp}%
}}%
%
{\color[named]{Black}{%
\special{pn 8}%
\special{pa 2010 2180}%
\special{pa 2610 1780}%
\special{fp}%
\special{pa 2010 2180}%
\special{pa 1410 1780}%
\special{fp}%
}}%
%
{\color[named]{Black}{%
\special{pn 8}%
\special{pa 2210 1780}%
\special{pa 2010 2180}%
\special{fp}%
}}%
%
{\color[named]{Black}{%
\special{pn 8}%
\special{pa 2010 2180}%
\special{pa 1810 1780}%
\special{fp}%
}}%
%
{\color[named]{Black}{%
\special{pn 8}%
\special{pa 1400 800}%
\special{pa 2600 1800}%
\special{fp}%
}}%
%
{\color[named]{Black}{%
\special{pn 8}%
\special{pa 2600 800}%
\special{pa 2200 1800}%
\special{fp}%
}}%
%
{\color[named]{Black}{%
\special{pn 8}%
\special{pa 1800 1800}%
\special{pa 2200 800}%
\special{fp}%
}}%
%
{\color[named]{Black}{%
\special{pn 8}%
\special{pa 1800 800}%
\special{pa 1400 1800}%
\special{fp}%
}}%
\put(21.000,-16.6000){\makebox(0,0){{\color[named]{Black}{\large $\alpha_{e,3}^{-}$}}}}%
\put(17.000,-16.6000){\makebox(0,0){{\color[named]{Black}{\large $\alpha_{e,4}^{-}$}}}}%
\put(19.2000,-9.2000){\makebox(0,0){{\color[named]{Black}{\large $\alpha_{e,4}^{+}$}}}}%
\put(23.2000,-9.2000){\makebox(0,0){{\color[named]{Black}{\large $\alpha_{e,3}^{+}$}}}}%
\put(26.0000,-6.2000){\makebox(0,0){{\color[named]{Black}{\large $t_e^{+}$}}}}%
\put(26.0000,-20.2000){\makebox(0,0){{\color[named]{Black}{\large $t_e^{-}$}}}}%
%
{\color[named]{Black}{%
\special{pn 4}%
\special{sh 1}%
\special{ar 2600 1800 16 16 0 6.2831853}%
\special{sh 1}%
\special{ar 2200 1800 16 16 0 6.2831853}%
\special{sh 1}%
\special{ar 1800 1800 16 16 0 6.2831853}%
\special{sh 1}%
\special{ar 1400 1800 16 16 0 6.2831853}%
\special{sh 1}%
\special{ar 2000 2180 16 16 0 6.2831853}%
\special{sh 1}%
\special{ar 2600 800 16 16 0 6.2831853}%
\special{sh 1}%
\special{ar 2200 800 16 16 0 6.2831853}%
\special{sh 1}%
\special{ar 1800 800 16 16 0 6.2831853}%
\special{sh 1}%
\special{ar 1400 800 16 16 0 6.2831853}%
\special{sh 1}%
\special{ar 2000 400 16 16 0 6.2831853}%
\special{sh 1}%
\special{ar 2000 400 16 16 0 6.2831853}%
}}%
%
{\color[named]{Black}{%
\special{pn 8}%
\special{pa 600 800}%
\special{pa 3400 800}%
\special{dt 0.045}%
}}%
%
{\color[named]{Black}{%
\special{pn 8}%
\special{pa 600 1800}%
\special{pa 3400 1800}%
\special{dt 0.045}%
}}%
\put(6.0000,-2.7000){\makebox(0,0){{\color[named]{Black}{\large Edge length}}}}%
\put(6.0000,-5.7000){\makebox(0,0){{\color[named]{Black}{\Large $\frac{\pi}{4}$}}}}%
\put(6.0000,-13.7000){\makebox(0,0){{\color[named]{Black}{\Large $\frac{\pi}{2}$}}}}%
\put(6.0000,-21.7000){\makebox(0,0){{\color[named]{Black}{\Large $\frac{\pi}{4}$}}}}%
\end{picture}}%
	}
	\caption{A part of $L_{\Gamma}$ related to an edge $e$ labeled by $4$ in Theorem~\ref{CAT(0)_thm}.}\label{link_fig}
	\end{center}
	\end{figure}
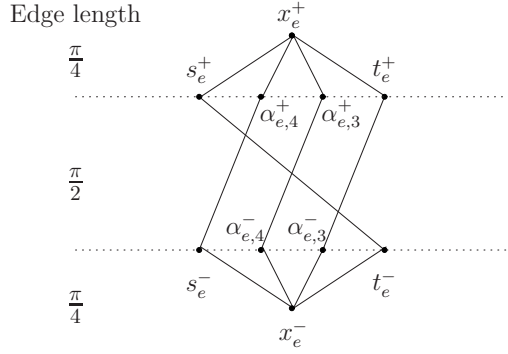

\section{Main results}\label{main_sec}
\subsection{Triangle-free Artin-Tits groups}
In this section, we prove Theorem~\ref{triangle-free_thm}.

\begin{lemma}\label{key1_lem}
Let $A_{\Gamma}$ be a triangle-free Artin-Tits group
and let $\Gamma$ be assigned an arbitrary direction, as in Theorem~\ref{CAT(0)_thm}.
Suppose further that $\Gamma$ contains one of the following directed graphs as a full subgraph:
	\begin{itemize}
	\item[(1)] the $2$-path directed graph $\Gamma_{m,n}$ as in Figure~\ref{key_fig}, and
	\item[(2)] the $3$-path directed graph $\Gamma_{2,2,2}$ as in Figure~\ref{key_fig}.
	\end{itemize}
Then $A_{\Gamma}$ is acylindrically hyperbolic.

	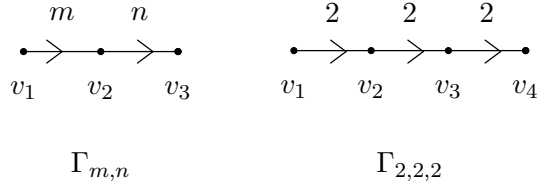
\begin{figure}
	\begin{center}
	\scalebox{1.0}{
{\unitlength 0.1in%
\begin{picture}(27.5000,8.0500)(4.5000,-11.3500)%
%
\special{pn 4}%
\special{sh 1}%
\special{ar 600 600 16 16 0 6.2831853}%
\special{sh 1}%
\special{ar 1000 600 16 16 0 6.2831853}%
\special{sh 1}%
\special{ar 1400 600 16 16 0 6.2831853}%
\special{sh 1}%
\special{ar 1400 600 16 16 0 6.2831853}%
\put(8.0000,-4.0000){\makebox(0,0){$m$}}%
\put(12.0000,-4.0000){\makebox(0,0){$n$}}%
%
\special{pn 4}%
\special{sh 1}%
\special{ar 2000 595 16 16 0 6.2831853}%
\special{sh 1}%
\special{ar 2400 595 16 16 0 6.2831853}%
\special{sh 1}%
\special{ar 2800 595 16 16 0 6.2831853}%
\special{sh 1}%
\special{ar 3200 595 16 16 0 6.2831853}%
\special{sh 1}%
\special{ar 3200 595 16 16 0 6.2831853}%
\put(22.0000,-3.9500){\makebox(0,0){$2$}}%
\put(26.0000,-3.9500){\makebox(0,0){$2$}}%
\put(30.0000,-3.9500){\makebox(0,0){$2$}}%
%
\special{pn 8}%
\special{pa 2000 595}%
\special{pa 3200 595}%
\special{fp}%
%
\special{pn 8}%
\special{pa 600 600}%
\special{pa 1400 600}%
\special{fp}%
\put(6.0000,-8.0000){\makebox(0,0){$v_1$}}%
\put(10.0000,-8.0000){\makebox(0,0){$v_2$}}%
\put(14.0000,-8.0000){\makebox(0,0){$v_3$}}%
\put(20.0000,-7.9500){\makebox(0,0){$v_1$}}%
\put(24.0000,-7.9500){\makebox(0,0){$v_2$}}%
\put(28.0000,-7.9500){\makebox(0,0){$v_3$}}%
\put(32.0000,-7.9500){\makebox(0,0){$v_4$}}%
\put(10.0000,-12.0000){\makebox(0,0){$\Gamma_{m,n}$}}%
\put(26.0000,-12.0000){\makebox(0,0){$\Gamma_{2,2,2}$}}%
%
\special{pn 8}%
\special{pa 800 600}%
\special{pa 720 540}%
\special{fp}%
\special{pa 720 660}%
\special{pa 800 600}%
\special{fp}%
%
\special{pn 8}%
\special{pa 1270 600}%
\special{pa 1190 540}%
\special{fp}%
\special{pa 1190 660}%
\special{pa 1270 600}%
\special{fp}%
%
\special{pn 8}%
\special{pa 2270 600}%
\special{pa 2190 540}%
\special{fp}%
\special{pa 2190 660}%
\special{pa 2270 600}%
\special{fp}%
%
\special{pn 8}%
\special{pa 2670 600}%
\special{pa 2590 540}%
\special{fp}%
\special{pa 2590 660}%
\special{pa 2670 600}%
\special{fp}%
%
\special{pn 8}%
\special{pa 3070 600}%
\special{pa 2990 540}%
\special{fp}%
\special{pa 2990 660}%
\special{pa 3070 600}%
\special{fp}%
\end{picture}}%
	}
	\vspace{0.2cm}
	\caption{A $2$-path directed graph $\Gamma_{m,n}$ with edges labeled by $m\geq 2$ and $n\geq 3$, and a $3$-path directed graph $\Gamma_{2,2,2}$ with edges labeled by $2$.}\label{key_fig}
	\end{center}
	\end{figure}
\end{lemma}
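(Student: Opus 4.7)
By Theorem~\ref{CAT(0)_thm} the group $A_{\Gamma}$ acts geometrically on the proper CAT(0) space $\widetilde{K}_{\Gamma}$, so by Theorem~\ref{acyl_thm} it suffices to exhibit a single rank-one isometry in $A_{\Gamma}$, provided $A_{\Gamma}$ is not virtually cyclic. In case~(1) the vertices $v_1$ and $v_3$ are non-adjacent in $\Gamma$ (since $\Gamma_{m,n}$ is a full subgraph), and in case~(2) the vertices $v_1$ and $v_4$ are non-adjacent; either way, van der Lek's theorem on full parabolic subgroups implies that the corresponding pair generates a free subgroup of rank two, ruling out virtual cyclicity.

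The plan is to construct, in each case, an element $g\in A_{\Gamma}$ whose spelling in the Brady--McCammond generators from Lemma~\ref{pres_gen_lem} traces a closed edge-loop in $K_{\Gamma}$ that is a local geodesic; its lift to $\widetilde{K}_{\Gamma}$ is then a $g$-invariant bi-infinite geodesic, hence an axis of $g$. Local geodesy will be checked at each vertex of the loop by computing in the Brady--McCammond link $L_{\Gamma}$ (with edge lengths $\pi/4$ and $\pi/2$ as in Figure~\ref{link_fig}) the distance between the incoming direction $p^-$ and the outgoing direction $p^+$, and verifying that it is at least $\pi$. To upgrade $g$ to a rank-one isometry, I will identify a vertex on the axis at which $d_{L_{\Gamma}}(p^-,p^+) > \pi$: the triangle inequality in the CAT(1) link then precludes the existence of any point at distance $\pi/2$ from both $p^-$ and $p^+$, so no flat half-plane can bound the axis at that vertex.

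In case~(2), a natural candidate is $g = v_1 v_4$. Any path in $L_{\Gamma}$ from $v_1^+$ to $v_4^-$ has to progress from the $v_1$-vertices of the link to the $v_4$-vertices, and, because $\Gamma_{2,2,2}$ is full, this progress must follow the $3$-path $v_1 v_2 v_3 v_4$ using at least three ``middle'' edges of length $\pi/2$; hence $d_{L_{\Gamma}}(v_1^+,v_4^-) \geq 3\pi/2 > \pi$, and similarly for the wrap-around angle. In case~(1), the label $n \geq 3$ introduces the generators $\alpha_{e_2,3},\ldots,\alpha_{e_2,n}$ and the associated asymmetric adjacencies in $L_{\Gamma}$ displayed in Figure~\ref{link_fig}; I will leverage this asymmetry to craft a word $g$ involving $v_1$ together with one of the $\alpha_{e_2,\bullet}$-generators (and possibly $x_{e_2}$) so that its axis passes through a vertex where the link distance between $p^-$ and $p^+$ strictly exceeds $\pi$.

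The main obstacle will be the rank-one verification in case~(1): unlike case~(2), where the $3$-path forces a large link distance on its own, the short $2$-path $\Gamma_{m,n}$ tends to produce simple candidates (such as $g = v_1 v_3$ or $g = v_1 \alpha_{e_2,3}$) for which $d_{L_{\Gamma}}(p^-,p^+) = \pi$ exactly, so one must carefully exploit the direction of $e_2$ and the label $n \geq 3$ to strictly exceed $\pi$. A secondary concern is that $\Gamma$ may contain extra vertices and edges beyond the prescribed full subgraph, which in principle could create shortcuts in $L_{\Gamma}$; one must verify that the chosen $g$ remains rank one regardless of such shortcuts, for instance by ensuring the axis probes a part of $L_{\Gamma}$ whose structure depends only on the full subgraph.
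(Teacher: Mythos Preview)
Your overall framework is correct, and your rank-one criterion---that a strict inequality $d_{L_{\Gamma}}(p^{-},p^{+})>\pi$ at some axis vertex rules out a bounding half-plane---is a valid one. The gap is that neither of your candidate elements actually achieves that strict inequality in general.

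In case~(2) your claim $d_{L_{\Gamma}}(v_1^{+},v_4^{-})\geq 3\pi/2$ is false. Fullness of $\Gamma_{2,2,2}$ only forbids edges among $v_1,\dots,v_4$; it does not forbid a vertex $w\in V(\Gamma)\setminus\{v_1,\dots,v_4\}$ adjacent to both $v_1$ and $v_4$ (this creates no triangle, since $v_1$ and $v_4$ are non-adjacent). With such a $w$ and suitable labels/directions one obtains a path $v_1^{+}\to w^{-}\to x^{-}\to v_4^{-}$ of length $\pi/2+\pi/4+\pi/4=\pi$ in $L_{\Gamma}$, so the link distance drops to exactly $\pi$ and your criterion no longer applies; the wrap-around angle has the same problem. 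This is precisely the phenomenon recorded in the paper's Figure~\ref{2links_fig}: for non-adjacent $v_i,v_j$ the link distance between points of $\{v_i^{\pm},v_j^{\pm}\}$ is $\geq\pi$, with equality whenever $v_i$ and $v_j$ are joined by a $2$-path in $\Gamma$. You anticipated such shortcuts at the end of your proposal, but for $g=v_1v_4$ they are fatal. In case~(1) you have not yet specified an element, and the same obstruction awaits.

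The paper sidesteps this by abandoning the strict-inequality criterion altogether. It takes $g=\alpha_{2,n}v_3v_1$ in case~(1) and $g=x_1x_3$ in case~(2); at every axis vertex the relevant link distance is exactly $\pi$. The rank-one argument is then a two-step contradiction: first, the geodesic of length $\pi$ between the two link directions is shown to be \emph{unique} (here one rules out shortcuts through $L_{\Gamma}\setminus L_{\Gamma'}$ using triangle-freeness and fullness), so a putative flat half-plane is forced to contain specific $2$-cells adjacent to $\widetilde{o}$; second, those forced cells dictate that the semicircle at the \emph{neighbouring} axis vertex must be a link path of length $\pi$ passing through a prescribed intermediate point (e.g.\ $x_1^{-}$ in case~(1), $v_1^{+}$ in case~(2))---and one checks directly in $L_{\Gamma}$ that no such path exists. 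This ``uniqueness plus propagation to the next vertex'' mechanism is the idea missing from your plan.
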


\begin{proof}

Let $\widetilde{K}_{\Gamma}$ be the $A_{\Gamma}$-equivariant CAT(0) space in Theorem~\ref{CAT(0)_thm}.
Let $L_{\Gamma}$ be the link of the unique vertex $o$ of $K_{\Gamma}$.
We find a rank one isometry in $A_{\Gamma}$.

$(1)$ 
We discuss the case where $\Gamma$ contains $\Gamma_{m,n}$ as a full subgraph.
By Lemma~\ref{pres_gen_lem}, $A_{\Gamma_{m,n}}$ has the following presentation:
        \begin{align}
	\left\langle 
	\begin{array}{l|l}
	v_1, v_2, v_3, x_1, x_2, &x_1=v_1v_2, x_1=v_2\alpha_{1,3}, \ldots, x_1=\alpha_{1,m} v_1,\\
	\alpha_{1,3}, \ldots, \alpha_{1,m}, &x_2=v_2v_3, x_2=v_3\alpha_{2,3}, \ldots, x_2=\alpha_{2,n} v_2\\
	\alpha_{2,3}, \ldots, \alpha_{2,n}
	\end{array}
	\right\rangle. \label{pres_m_n_eq}
	\end{align}

Let $K_{\Gamma_{m,n}}\subset K_{\Gamma}$ be the presentation complex of (\ref{pres_m_n_eq}).
Let $L_{\Gamma_{m,n}}\subset L_{\Gamma}$ be the link in $K_{\Gamma_{m,n}}$.
We draw $L_{\Gamma_{m,n}}$ on the lefthandside of Figure~\ref{link_2_3_fig}.

We show that $\alpha_{2,n}v_3v_1$ acts as a rank one isometry on $\widetilde{K}_{\Gamma}$.
First we find an axis of $\alpha_{2,n}v_3v_1$.
In $K_{\Gamma}$, 
let $l$ be the concatenation of $1$-cells $\alpha_{2,n}$, $v_3$ and $v_1$ in this order.
We note that $1$-cells of $K_{\Gamma}$ are loops based at $o$. 
We show that $l$ is a local geodesic.
Since $l$ is geodesic around any point of $l$ except $o$,
we investigate $l$ around $o$ and show that 
	\begin{align}\label{angle-results_eq}
	d_{L_{\Gamma}}(v_1^{+}, \alpha_{2,n}^{-}),\ 
	d_{L_{\Gamma}}(\alpha_{2,n}^{+}, v_3^-),\ 
	d_{L_{\Gamma}}(v_3^{+}, v_1^{-}) \geq \pi.
	\end{align}
To see $d_{L_{\Gamma}}(v_1^{+}, \alpha_{2,n}^{-})\geq \pi$,
we find a shortest path $P$ from $v_1^{+}$ to $\alpha_{2,n}^{-}$ uniquely in $L_{\Gamma_{m,n}}$, which is of length $\pi$ (see the bold line in Figure~\ref{link_2_3_fig}).
This path $P$ is the unique shortest one even in $L_{\Gamma}$.
Indeed, if we have a shortest path $P'$ from $v_1^{+}$ to $\alpha_{2,n}^{-}$ in $L_{\Gamma}$ through $L_{\Gamma}-L_{\Gamma_{m,n}}$,
then $P'$ should go out from $L_{\Gamma_{m,n}}$ at a point in $\{v_i^{\pm}\}_{i=1,2,3}$
and come back into $L_{\Gamma_{m,n}}$ at another point in $\{v_i^{\pm}\}_{i=1,2,3}$.
Since $\Gamma$ is simple and $\Gamma_{m,n}$ is a full subgraph, every path through $\Gamma-\Gamma_{m,n}$ in $\Gamma$ between different vertices in $\{v_i\}_{i=1,2,3}$ contains at least two edges.
It follows that every path through $L_{\Gamma}- L_{\Gamma_{m,n}}$ between different points of $\{v_i^{\pm}\}_{i=1,2,3}$ is of length greater than or equal to $\pi$, see Figure~\ref{2links_fig}.
This shows that the length of $P'$ is greater than $\pi$.
In particular we have
	\begin{align}\label{angle-results1_eq}
	d_{L_{\Gamma}}(v_1^{+}, \alpha_{2,n}^{-}) = d_{L_{\Gamma_{m,n}}}(v_1^{+}, \alpha_{2,n}^{-})=\pi. 
	\end{align}
Similarly, we confirm that 
	\begin{align}\label{angle-results2_eq}
	d_{L_{\Gamma}}(\alpha_{2,n}^{+}, v_3^-) = d_{L_{\Gamma_{m,n}}}(\alpha_{2,n}^{+}, v_3^-)=\pi.
	\end{align}
Also, according to Figure~\ref{link_2_3_fig}, $d_{L_{\Gamma_{m,n}}}(v_3^{+}, v_1^{-}) > \pi$.
Therefore, 
	\begin{align}\label{angle-results3_eq}
	d_{L_{\Gamma}}(v_3^{+}, v_1^{-}) \geq  \pi.
	\end{align}
Here, the minimum $\pi$ occurs only when $v_1$ and $v_3$ are connected by a $2$-path $\Gamma'$ in Figure~\ref{2links_fig}.
By (\ref{angle-results1_eq}),   (\ref{angle-results2_eq}) and  (\ref{angle-results3_eq}),
 we have (\ref{angle-results_eq}).
It follows that $l$ is geodesic around $o$.
Indeed, assume that a geodesic from $v_1^+$ to $\alpha_{2,n}^-$,
a geodesic from $\alpha_{2,n}^+$ to $v_3^-$ or a geodesic from $v_3^+$ to $v_1^-$
does not pass through $o$.
Then a triplet $(v_1^+, o, \alpha_{2,n}^-)$, $(\alpha_{2,n}^+, o, v_3^-)$ or $(v_3^+, o, v_1^-)$
contributes to a non-collapsing geodesic triangle as vertices.
Since $K_\Gamma$ is piecewise Euclidean and CAT(0) around $o$,
three interior angles of such a triangle must be less than $\pi$, contrary to (3.2)
(cf.\ discussions in the proof of Theorem 15 of \cite{MR1086658}).
Therefore $l$ is locally geodesic.
Hence the lift $\widetilde{l}$ through $\widetilde{o}$ is an axis of $\alpha_{2,n}v_3v_1$, see Figure~\ref{l_2_3_fig}.

Next we show that $\widetilde{l}$ does not bound a flat half plane.
On the contrary, we assume that $\widetilde{l}$ bounds a flat half plane $E$.
By $\mathrm{Pr}:\widetilde{K}_{\Gamma} \to K_{\Gamma}$, the unit semicircle $C\subset E$ centered at $\widetilde{o}$ is isometric to the path $P$ of length $\pi$ from $v_1^{+}$ to $\alpha_{2,n}^{-}$ in $L_{\Gamma}$ (see the bold semicircle in Figure~\ref{l_2_3_fig}).
It follows that $C$ goes through lifts of $v_1^+$, $v_2^{-}$, $x_2^{-}$ and $\alpha_{2,n}^-$.
Therefore, $E$ contains $2$-cells corresponding to relations $x_1=v_1v_2$, $x_2=v_2v_3$ and $x_2=\alpha_{2,n}v_2$ around $\widetilde{o}$.
The unit semicircle in $E$ centered at $v_1^{-1}(\widetilde{o})$
should go through lifts of $v_1^{-}$, $x_1^{-}$ and $v_3^{+}$ (see the dotted semicircle in Figure~\ref{l_2_3_fig}).
This is impossible, since there is no path of length $\pi$ in $L_{\Gamma}$ from $v_1^{-}$ to $v_3^{+}$ through $x_1^{-}$ (see Figure~\ref{link_2_3_fig}).

Since we detected a rank one isometry $\alpha_{2,n}v_3v_1$ in $A_{\Gamma}$,
Theorem~\ref{acyl_thm} shows that $A_{\Gamma}$ is acylindrically hyperbolic.

$(2)$ We discuss the case where $\Gamma$ contains $\Gamma_{2,2,2}$ as a full subgraph.
The argument is similar to $(1)$.
According to Lemma~\ref{pres_gen_lem}, $A_{\Gamma_{2,2,2}}$ has the following presentation:

        \begin{align}
	\left\langle 
	\begin{array}{l|l}
	v_1, v_2, v_3, v_4 &x_1=v_1v_2, x_1=v_2v_1,\\
	x_1, x_2, x_3 &x_2=v_2v_3, x_2=v_3v_2,\\
	 		      &x_3=v_3v_4, x_3=v_4v_3
	\end{array}
	\right\rangle. \label{pres_2_2_2_eq}
	\end{align}

Let $K_{\Gamma_{2,2,2}}\subset K_{\Gamma}$ be the presentation complex of (\ref{pres_2_2_2_eq}).
Let $L_{\Gamma_{2,2,2}}\subset L_{\Gamma}$ be the link in $K_{\Gamma_{2,2,2}}$.
We draw $L_{\Gamma_{2,2,2}}$ on the righthandside of Figure~\ref{link_2_3_fig}.

We show that $x_1x_3$ acts as a rank one isometry on $\widetilde{K}_{\Gamma}$. 
Let $l$ be the concatenation of $1$-cells $x_1$ and $x_3$ in $K_{\Gamma}$.
As in $(1)$, we can find a shortest path $Q$ from $x_3^+$ to $x_1^-$ uniquely in $L_\Gamma$ (see the bold line in Figure~\ref{link_2_3_fig}).
This path $Q$ is of length $\pi$.
In particular, we have $d_{L_{\Gamma}}(x_3^{+}, x_1^{-})=\pi$. 
Similarly we confirm that $d_{L_{\Gamma}}(x_1^{+},x_3^{-})= \pi$. 
Therefore $l$ is a local geodesic,
and the lift $\widetilde{l}$ through $\widetilde{o}$ is an axis of $x_1x_3$, see Figure~\ref{l_2_2_2_fig}.
Now assume that $\widetilde{l}$ bounds a flat half plane $E$.
By $\mathrm{Pr}:\widetilde{K}_{\Gamma} \to K_{\Gamma}$, the unit semicircle $C\subset E$ centered at $\widetilde{o}$ is isometric to the path $Q$ of length $\pi$ from $x_3^{+}$ to $x_1^{-}$ in $L_{\Gamma}$
(see the bold semicircle in Figure~\ref{l_2_2_2_fig}).
It follows that $C$ goes through lifts of $x_3^+$, $v_3^{+}$, $v_2^{-}$ and $x_1^-$.
Therefore, $E$ contains lifts of $2$-cells corresponding to relations $x_3=v_4v_3$, $x_2=v_3v_2$ and $x_1=v_2v_1$
around $\widetilde{o}$.
Then the unit semicircle in $E$ centered at $x_1(\widetilde{o})$
should go through lifts of $x_1^{+}$, $v_1^{+}$ and $x_3^{-}$ (see the dotted semicircle in Figure~\ref{l_2_2_2_fig}).
This is impossible, since there is no path of length $\pi$ in $L_{\Gamma}$ from $x_1^{+}$ to $x_3^{-}$ through $v_1^{+}$ (see Figure~\ref{link_2_3_fig}).
Theorem~\ref{acyl_thm} shows that $A_{\Gamma}$ is acylindrically hyperbolic.
	

	\begin{figure}
	\begin{center}
	\hspace{0.3cm}
	\scalebox{0.8}{
{\unitlength 0.1in%
\begin{picture}(38.3000,20.8500)(5.8000,-22.8000)%
\put(14.0000,-2.6500){\makebox(0,0){{\color[named]{Black}{\large $x_1^{+}$}}}}%
\put(26.0000,-2.6500){\makebox(0,0){{\color[named]{Black}{\large $x_2^{+}$}}}}%
\put(26.1000,-23.4500){\makebox(0,0){{\color[named]{Black}{\large $x_2^{-}$}}}}%
\put(14.1000,-23.4500){\makebox(0,0){{\color[named]{Black}{\large $x_1^{-}$}}}}%
\put(8.0000,-5.8000){\makebox(0,0){{\color[named]{Black}{\large $v_1^{+}$}}}}%
\put(8.3000,-9.8000){\makebox(0,0){{\color[named]{Black}{\large \bf $P$}}}}%
\put(20.0000,-5.8500){\makebox(0,0){{\color[named]{Black}{\large $v_2^{+}$}}}}%
\put(32.0000,-5.8500){\makebox(0,0){{\color[named]{Black}{\large $v_3^{+}$}}}}%
\put(32.0000,-19.800){\makebox(0,0){{\color[named]{Black}{\large $v_3^{-}$}}}}%
\put(20.0000,-19.800){\makebox(0,0){{\color[named]{Black}{\large $v_2^{-}$}}}}%
\put(8.0000,-19.800){\makebox(0,0){{\color[named]{Black}{\large $v_1^{-}$}}}}%
%
{\color[named]{Black}{%
\special{pn 8}%
\special{pa 1410 400}%
\special{pa 1410 400}%
\special{fp}%
}}%
%
{\color[named]{Black}{%
\special{pn 8}%
\special{pa 1410 400}%
\special{pa 2010 800}%
\special{fp}%
\special{pa 1410 400}%
\special{pa 810 800}%
\special{fp}%
}}%
%
{\color[named]{Black}{%
\special{pn 8}%
\special{pa 1610 800}%
\special{pa 1410 400}%
\special{fp}%
}}%
%
{\color[named]{Black}{%
\special{pn 8}%
\special{pa 1410 400}%
\special{pa 1210 800}%
\special{fp}%
}}%
%
{\color[named]{Black}{%
\special{pn 8}%
\special{pa 2610 400}%
\special{pa 3210 800}%
\special{fp}%
\special{pa 2610 400}%
\special{pa 2010 800}%
\special{fp}%
}}%
%
{\color[named]{Black}{%
\special{pn 8}%
\special{pa 2810 800}%
\special{pa 2610 400}%
\special{fp}%
}}%
%
{\color[named]{Black}{%
\special{pn 8}%
\special{pa 2610 400}%
\special{pa 2410 800}%
\special{fp}%
}}%
%

%
{\color[named]{Black}{%
\special{pn 8}%
\special{pa 1410 2180}%
\special{pa 1410 2180}%
\special{fp}%
}}%
%
{\color[named]{Black}{%
\special{pn 8}%
\special{pa 1410 2180}%
\special{pa 2010 1780}%
\special{fp}%
\special{pa 1410 2180}%
\special{pa 810 1780}%
\special{fp}%
}}%
%
{\color[named]{Black}{%
\special{pn 8}%
\special{pa 1610 1780}%
\special{pa 1410 2180}%
\special{fp}%
}}%
%
{\color[named]{Black}{%
\special{pn 8}%
\special{pa 1410 2180}%
\special{pa 1210 1780}%
\special{fp}%
}}%
%
{\color[named]{Black}{%
\special{pn 8}%
\special{pa 2610 2180}%
\special{pa 3210 1780}%
\special{fp}%
}}%
{\color[named]{Black}{%
\special{pn 30}%
\special{pa 2610 2180}%
\special{pa 2010 1780}%
\special{fp}%
}}%
%
{\color[named]{Black}{%
\special{pn 8}%
\special{pa 2810 1780}%
\special{pa 2610 2180}%
\special{fp}%
}}%
%
{\color[named]{Black}{%
\special{pn 30}%
\special{pa 2610 2180}%
\special{pa 2410 1780}%
\special{fp}%
}}%
%

%

%

%
{\color[named]{Black}{%
\special{pn 30}%
\special{pa 810 800}%
\special{pa 2010 1780}%
\special{fp}%
}}%
%
{\color[named]{Black}{%
\special{pn 8}%
\special{pa 2000 800}%
\special{pa 1600 1800}%
\special{fp}%
}}%
%
{\color[named]{Black}{%
\special{pn 8}%
\special{pa 1210 800}%
\special{pa 810 1780}%
\special{fp}%
}}%
%
{\color[named]{Black}{%
\special{pn 8}%
\special{pa 2410 800}%
\special{pa 2010 1780}%
\special{fp}%
}}%
%
{\color[named]{Black}{%
\special{pn 8}%
\special{pa 2800 1800}%
\special{pa 3200 800}%
\special{fp}%
}}%
%
{\color[named]{Black}{%
\special{pn 8}%
\special{pa 3210 1780}%
\special{pa 2010 800}%
\special{fp}%
}}%
%

%

%

\put(27.3000,-16.6000){\makebox(0,0){{\color[named]{Black}{\large $\alpha_{2,3}^{-}$}}}}%
\put(23.3000,-16.6000){\makebox(0,0){{\color[named]{Black}{\large $\alpha_{2,n}^{-}$}}}}%
\put(15.3000,-16.6000){\makebox(0,0){{\color[named]{Black}{\large $\alpha_{1,3}^{-}$}}}}%
\put(11.3000,-16.6000){\makebox(0,0){{\color[named]{Black}{\large $\alpha_{1,m}^{-}$}}}}%
\put(13.2000,-8.6000){\makebox(0,0){{\color[named]{Black}{\large $\alpha_{1,m}^{+}$}}}}%
\put(17.2000,-8.6000){\makebox(0,0){{\color[named]{Black}{\large $\alpha_{1,3}^{+}$}}}}%
\put(25.2000,-8.6000){\makebox(0,0){{\color[named]{Black}{\large $\alpha_{2,n}^{+}$}}}}%
\put(29.2000,-8.6000){\makebox(0,0){{\color[named]{Black}{\large $\alpha_{2,3}^{+}$}}}}%
%

%
{\color[named]{Black}{%
\special{pn 4}%
\special{sh 1}%
\special{ar 2530 1260 10 10 0 6.2831853}%
\special{sh 1}%
\special{ar 2730 1260 10 10 0 6.2831853}%
\special{sh 1}%
\special{ar 2620 1260 10 10 0 6.2831853}%
\special{sh 1}%
\special{ar 2620 1260 10 10 0 6.2831853}%
}}%
%
{\color[named]{Black}{%
\special{pn 4}%
\special{sh 1}%
\special{ar 1330 1260 10 10 0 6.2831853}%
\special{sh 1}%
\special{ar 1530 1260 10 10 0 6.2831853}%
\special{sh 1}%
\special{ar 1420 1260 10 10 0 6.2831853}%
\special{sh 1}%
\special{ar 1420 1260 10 10 0 6.2831853}%
}}%
%

%

%

%

{\color[named]{Black}{%
\special{pn 30}%
\special{pa 1410 2180}%
\special{pa 810 1780}%
\special{dt 0.045}%
}}%

\end{picture}}%
	}
	\hspace{-1.7cm}
	\scalebox{0.72}{
{\unitlength 0.1in%
\begin{picture}(29.1000,22.0500)(1.3000,-24.0500)%
%
{\color[named]{Black}{%
\special{pn 8}%
\special{pa 600 782}%
\special{pa 1007 400}%
\special{fp}%
\special{pa 1007 400}%
\special{pa 1413 782}%
\special{fp}%
\special{pa 1413 782}%
\special{pa 1820 400}%
\special{fp}%
\special{pa 1820 400}%
\special{pa 2227 782}%
\special{fp}%
\special{pa 2227 782}%
\special{pa 2633 400}%
\special{fp}%
\special{pa 2633 400}%
\special{pa 3040 782}%
\special{fp}%
}}%

{\color[named]{Black}{%
\special{pn 30}%
\special{pa 1007 400}%
\special{pa 600 782}%
\special{dt 0.045}%
}}%

%
{\color[named]{Black}{%
\special{pn 8}%
\special{pa 600 1928}%
\special{pa 1007 2310}%
\special{fp}%
\special{pa 1007 2310}%
\special{pa 1413 1928}%
\special{fp}%
\special{pa 1413 1928}%
\special{pa 1820 2310}%
\special{fp}%
\special{pa 1820 2310}%
\special{pa 2227 1928}%
\special{fp}%
\special{pa 2227 1928}%
\special{pa 2633 2310}%
\special{fp}%
\special{pa 2633 2310}%
\special{pa 3040 1928}%
\special{fp}%
}}%
%
{\color[named]{Black}{%
\special{pn 8}%
\special{pa 600 1928}%
\special{pa 1413 782}%
\special{fp}%
\special{pa 1413 782}%
\special{pa 2227 1928}%
\special{fp}%
\special{pa 2227 1928}%
\special{pa 3040 782}%
\special{fp}%
\special{pa 3040 1928}%
\special{pa 2227 782}%
\special{fp}%
\special{pa 2227 782}%
\special{pa 1413 1928}%
\special{fp}%
\special{pa 1413 1928}%
\special{pa 600 782}%
\special{fp}%
}}%
\put(10.0000,-2.6500){\makebox(0,0){{\color[named]{Black}{\large $x_{1}^{+}$}}}}%
\put(18.0000,-2.6500){\makebox(0,0){{\color[named]{Black}{\large $x_2^{+}$}}}}%
\put(26.3000,-2.6500){\makebox(0,0){{\color[named]{Black}{\large $x_3^{+}$}}}}%
\put(25.3000,-7.00){\makebox(0,0){{\color[named]{Black}{\Large \bf $Q$}}}}%
\put(26.3000,-24.7000){\makebox(0,0){{\color[named]{Black}{\large $x_3^{-}$}}}}%
\put(18.2000,-24.7000){\makebox(0,0){{\color[named]{Black}{\large $x_2^{-}$}}}}%
\put(10.1000,-24.7000){\makebox(0,0){{\color[named]{Black}{\large $x_1^{-}$}}}}%
\put(4.1000,-7.8000){\makebox(0,0){{\color[named]{Black}{\large $v_1^{+}$}}}}%
\put(14.1000,-6.000){\makebox(0,0){{\color[named]{Black}{\large $v_2^{+}$}}}}%
\put(22.1000,-6.000){\makebox(0,0){{\color[named]{Black}{\large $v_3^{+}$}}}}%
\put(31.7000,-7.8000){\makebox(0,0){{\color[named]{Black}{\large $v_4^{+}$}}}}%
\put(31.7000,-19.2000){\makebox(0,0){{\color[named]{Black}{\large $v_4^{-}$}}}}%
\put(4.4000,-19.2000){\makebox(0,0){{\color[named]{Black}{\large $v_1^{-}$}}}}%
\put(14.1000,-20.8000){\makebox(0,0){{\color[named]{Black}{\large $v_2^{-}$}}}}%
\put(22.2000,-20.8000){\makebox(0,0){{\color[named]{Black}{\large $v_3^{-}$}}}}%
%
{\color[named]{Black}{%
\special{pn 4}%
\special{sh 1}%
\special{ar 1000 2315 16 16 0 6.2831853}%
\special{sh 1}%
\special{ar 600 1915 16 16 0 6.2831853}%
\special{sh 1}%
\special{ar 1400 1915 16 16 0 6.2831853}%
\special{sh 1}%
\special{ar 2220 1915 16 16 0 6.2831853}%
\special{sh 1}%
\special{ar 1820 2315 16 16 0 6.2831853}%
\special{sh 1}%
\special{ar 2620 2315 16 16 0 6.2831853}%
\special{sh 1}%
\special{ar 3040 1915 16 16 0 6.2831853}%
\special{sh 1}%
\special{ar 3040 795 16 16 0 6.2831853}%
\special{sh 1}%
\special{ar 2220 795 16 16 0 6.2831853}%
\special{sh 1}%
\special{ar 1410 795 16 16 0 6.2831853}%
\special{sh 1}%
\special{ar 600 795 16 16 0 6.2831853}%
\special{sh 1}%
\special{ar 1000 395 16 16 0 6.2831853}%
\special{sh 1}%
\special{ar 1810 395 16 16 0 6.2831853}%
\special{sh 1}%
\special{ar 2630 395 16 16 0 6.2831853}%
\special{sh 1}%
\special{ar 2630 395 16 16 0 6.2831853}%
}}%
%
%
{\color[named]{Black}{%
\special{pn 30}%
\special{pa 2620 405}%
\special{pa 2220 805}%
\special{fp}%
\special{pa 2220 805}%
\special{pa 1420 1925}%
\special{fp}%
\special{pa 1420 1925}%
\special{pa 1020 2295}%
\special{fp}%
}}%
\end{picture}}%
	}
	\caption{$L_{\Gamma_{m,n}}$ and $L_{\Gamma_{2,2,2}}$.}\label{link_2_3_fig}
	\end{center}
	\end{figure}
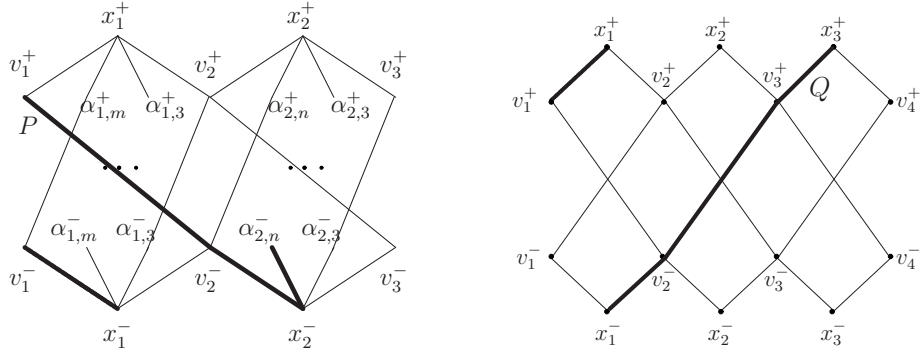
	
	\begin{figure}
	\begin{center}
	\scalebox{0.9}{
	\input{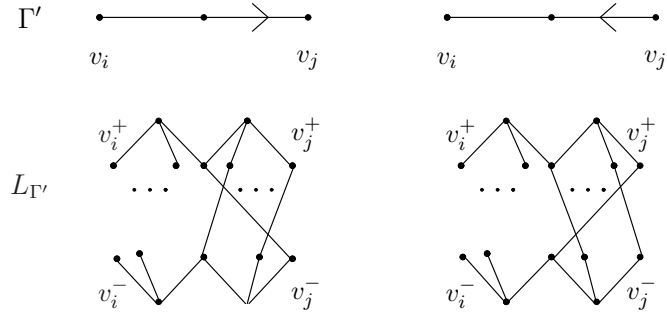}
	}
	\caption{Assuming that $v_i$ and $v_j$ are not adjacent in $\Gamma$, the distance between different points of $\{v_i^{\pm}, v_j^{\pm}\}$ in $L_{\Gamma}$ is at least $\pi$. The minimum $\pi$ occurs only in cases where $v_i$ and $v_j$ are connected by a $2$-path $\Gamma'$ in $\Gamma$.}\label{2links_fig}
	\end{center}
	\end{figure}
	
	\begin{figure}
	\begin{center}
	\scalebox{0.8}{
{\unitlength 0.1in%
\begin{picture}(32.0000,11.9800)(6.2300,-14.7500)%
\put(17.8000,-15.4000){\makebox(0,0){{\color[named]{Black}{\Large ${v}_1$}}}}%
\put(26.9000,-15.4000){\makebox(0,0){{\color[named]{Black}{\Large ${\alpha}_{2,n}$}}}}%
\put(22.3500,-15.4000){\makebox(0,0){{\color[named]{Black}{\Large $\widetilde{o}$}}}}%

\put(16.3900,-7.7500){\makebox(0,0){{\color[named]{Black}{\Large ${x}_1$ }}}}%
\put(20.3900,-9.2000){\makebox(0,0){{\color[named]{Black}{\Large ${v}_2$}}}}%
\put(26.4700,-3.200){\makebox(0,0){{\color[named]{Black}{\Large ${v}_3$}}}}%
\put(25.6100,-8.0500){\makebox(0,0){{\color[named]{Black}{\Large ${x}_2$}}}}%
\put(33.0300,-9.3400){\makebox(0,0){{\color[named]{Black}{\Large ${v}_2$}}}}%
%
{\color[named]{Black}{%
\special{pn 8}%
\special{pa 3070 470}%
\special{pa 3070 1360}%
\special{fp}%
\special{pa 3070 1360}%
\special{pa 3070 1360}%
\special{fp}%
}}%
%
{\color[named]{Black}{%
\special{pn 8}%
\special{pa 2220 1360}%
\special{pa 3070 470}%
\special{fp}%
}}%
%
{\color[named]{Black}{%
\special{pn 8}%
\special{pa 3070 470}%
\special{pa 2220 470}%
\special{fp}%
\special{pa 2220 470}%
\special{pa 2220 1360}%
\special{fp}%
}}%
%
{\color[named]{Black}{%
\special{pn 8}%
\special{pa 1360 1370}%
\special{pa 2220 470}%
\special{fp}%
}}%
%
{\color[named]{Black}{%
\special{pn 30}%
\special{ar 2223 1374 260 260 3.1415927 6.2831853}%
}}%
%
{\color[named]{Black}{%
\special{pn 30}%
\special{pa 2223 1374}%
\special{pa 3823 1374}%
\special{fp}%
\special{pa 2223 1374}%
\special{pa 623 1374}%
\special{fp}%
}}%
%
{\color[named]{Black}{%
\special{pn 8}%
\special{pa 1023 1374}%
\special{pa 920 1310}%
\special{fp}%
\special{pa 920 1438}%
\special{pa 1023 1374}%
\special{fp}%
}}%
%
{\color[named]{Black}{%
\special{pn 8}%
\special{pa 1890 1370}%
\special{pa 1787 1306}%
\special{fp}%
\special{pa 1787 1434}%
\special{pa 1890 1370}%
\special{fp}%
}}%
%
{\color[named]{Black}{%
\special{pn 8}%
\special{pa 2740 1370}%
\special{pa 2637 1306}%
\special{fp}%
\special{pa 2637 1434}%
\special{pa 2740 1370}%
\special{fp}%
}}%
%
{\color[named]{Black}{%
\special{pn 8}%
\special{pa 3490 1370}%
\special{pa 3387 1306}%
\special{fp}%
\special{pa 3387 1434}%
\special{pa 3490 1370}%
\special{fp}%
}}%
\put(5.000,-13.4000){\makebox(0,0){{\color[named]{Black}{\Large $\widetilde{l}$}}}}%
\put(10.9000,-15.4000){\makebox(0,0){{\color[named]{Black}{\Large $v_3$}}}}%
\put(34.9000,-15.4000){\makebox(0,0){{\color[named]{Black}{\Large ${v}_3$}}}}%
%
%
{\color[named]{Black}{%
\special{pn 8}%
\special{pa 2690 470}%
\special{pa 2587 406}%
\special{fp}%
\special{pa 2587 534}%
\special{pa 2690 470}%
\special{fp}%
}}%
%
{\color[named]{Black}{%
\special{pn 8}%
\special{pa 3070 910}%
\special{pa 3000 1020}%
\special{fp}%
\special{pa 3140 1020}%
\special{pa 3070 910}%
\special{fp}%
}}%
%
{\color[named]{Black}{%
\special{pn 8}%
\special{pa 2220 910}%
\special{pa 2150 1020}%
\special{fp}%
\special{pa 2290 1020}%
\special{pa 2220 910}%
\special{fp}%
}}%
%
{\color[named]{Black}{%
\special{pn 8}%
\special{pa 2637 921}%
\special{pa 2516 969}%
\special{fp}%
\special{pa 2629 1051}%
\special{pa 2637 921}%
\special{fp}%
}}%
%
{\color[named]{Black}{%
\special{pn 8}%
\special{pa 1797 911}%
\special{pa 1676 959}%
\special{fp}%
\special{pa 1789 1041}%
\special{pa 1797 911}%
\special{fp}%
}}%
%
{\color[named]{Black}{%
\special{pn 30}%
\special{pn 30}%
\special{pa 1110 1370}%
\special{pa 1111 1350}%
\special{fp}%
\special{pa 1114 1326}%
\special{pa 1118 1307}%
\special{fp}%
\special{pa 1125 1283}%
\special{pa 1132 1265}%
\special{fp}%
\special{pa 1143 1243}%
\special{pa 1153 1226}%
\special{fp}%
\special{pa 1168 1206}%
\special{pa 1181 1191}%
\special{fp}%
\special{pa 1199 1174}%
\special{pa 1214 1162}%
\special{fp}%
\special{pa 1234 1148}%
\special{pa 1252 1139}%
\special{fp}%
\special{pa 1274 1128}%
\special{pa 1293 1122}%
\special{fp}%
\special{pa 1316 1115}%
\special{pa 1336 1112}%
\special{fp}%
\special{pa 1360 1110}%
\special{pa 1381 1110}%
\special{fp}%
\special{pa 1405 1112}%
\special{pa 1424 1116}%
\special{fp}%
\special{pa 1448 1122}%
\special{pa 1466 1128}%
\special{fp}%
\special{pa 1488 1139}%
\special{pa 1506 1148}%
\special{fp}%
\special{pa 1526 1163}%
\special{pa 1542 1175}%
\special{fp}%
\special{pa 1559 1192}%
\special{pa 1572 1206}%
\special{fp}%
\special{pa 1586 1226}%
\special{pa 1596 1243}%
\special{fp}%
\special{pa 1608 1265}%
\special{pa 1615 1283}%
\special{fp}%
\special{pa 1622 1307}%
\special{pa 1626 1326}%
\special{fp}%
\special{pa 1629 1350}%
\special{pa 1630 1370}%
\special{fp}%
}}%
\end{picture}}%
	}
	\caption{An axis $\widetilde{l}$ of $\alpha_{2,n}v_3v_1$.}\label{l_2_3_fig}
	\end{center}
	\end{figure}
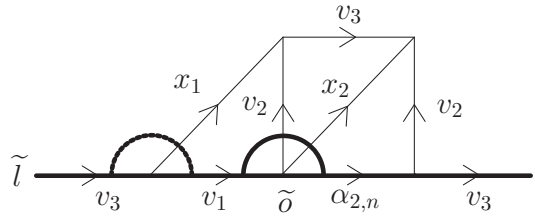

	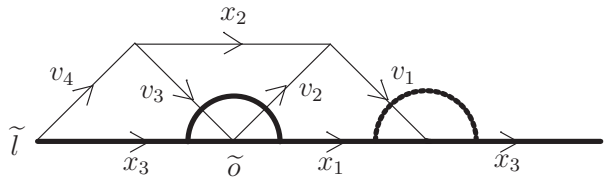
\begin{figure}
	\begin{center}
	\scalebox{0.9}{
{\unitlength 0.1in%
\begin{picture}(32.5000,8.7000)(2.7600,-12.8100)%
%
{\color[named]{Black}{%
\special{pn 8}%
\special{pa 846 634}%
\special{pa 1411 1200}%
\special{fp}%
\special{pa 846 634}%
\special{pa 280 1200}%
\special{fp}%
}}%
%
{\color[named]{Black}{%
\special{pn 30}%
\special{pa 2546 1204}%
\special{pa 286 1204}%
\special{fp}%
}}%
%
{\color[named]{Black}{%
\special{pn 30}%
\special{pa 2526 1204}%
\special{pa 3526 1204}%
\special{fp}%
}}%
%
{\color[named]{Black}{%
\special{pn 8}%
\special{pa 1970 640}%
\special{pa 850 640}%
\special{fp}%
}}%
%
{\color[named]{Black}{%
\special{pn 8}%
\special{pa 1182 962}%
\special{pa 1142 849}%
\special{fp}%
\special{pa 1142 849}%
\special{pa 1142 849}%
\special{fp}%
}}%
%
{\color[named]{Black}{%
\special{pn 8}%
\special{pa 1066 936}%
\special{pa 1182 962}%
\special{fp}%
}}%
%
{\color[named]{Black}{%
\special{pn 8}%
\special{pa 2054 1208}%
\special{pa 1950 1150}%
\special{fp}%
\special{pa 1950 1150}%
\special{pa 1950 1150}%
\special{fp}%
}}%
%
{\color[named]{Black}{%
\special{pn 8}%
\special{pa 1950 1266}%
\special{pa 2054 1208}%
\special{fp}%
}}%
%
{\color[named]{Black}{%
\special{pn 8}%
\special{pa 3054 1208}%
\special{pa 2950 1150}%
\special{fp}%
\special{pa 2950 1150}%
\special{pa 2950 1150}%
\special{fp}%
}}%
%
{\color[named]{Black}{%
\special{pn 8}%
\special{pa 2950 1266}%
\special{pa 3054 1208}%
\special{fp}%
}}%
%
{\color[named]{Black}{%
\special{pn 8}%
\special{pa 924 1208}%
\special{pa 820 1150}%
\special{fp}%
\special{pa 820 1150}%
\special{pa 820 1150}%
\special{fp}%
}}%
%
{\color[named]{Black}{%
\special{pn 8}%
\special{pa 820 1266}%
\special{pa 924 1208}%
\special{fp}%
}}%
%
{\color[named]{Black}{%
\special{pn 8}%
\special{pa 1460 640}%
\special{pa 1356 572}%
\special{fp}%
\special{pa 1356 572}%
\special{pa 1356 572}%
\special{fp}%
}}%
%
{\color[named]{Black}{%
\special{pn 8}%
\special{pa 1360 696}%
\special{pa 1464 638}%
\special{fp}%
}}%
%
{\color[named]{Black}{%
\special{pn 8}%
\special{pa 605 881}%
\special{pa 490 910}%
\special{fp}%
\special{pa 490 910}%
\special{pa 490 910}%
\special{fp}%
}}%
%
{\color[named]{Black}{%
\special{pn 8}%
\special{pa 570 994}%
\special{pa 605 881}%
\special{fp}%
}}%
%
{\color[named]{Black}{%
\special{pn 8}%
\special{pa 1715 881}%
\special{pa 1600 910}%
\special{fp}%
\special{pa 1600 910}%
\special{pa 1600 910}%
\special{fp}%
}}%
%
{\color[named]{Black}{%
\special{pn 8}%
\special{pa 1680 994}%
\special{pa 1715 881}%
\special{fp}%
}}%
%
{\color[named]{Black}{%
\special{pn 8}%
\special{pa 1970 640}%
\special{pa 2535 1206}%
\special{fp}%
\special{pa 1970 640}%
\special{pa 1404 1206}%
\special{fp}%
}}%
%
{\color[named]{Black}{%
\special{pn 8}%
\special{pa 2302 962}%
\special{pa 2262 849}%
\special{fp}%
\special{pa 2262 849}%
\special{pa 2262 849}%
\special{fp}%
}}%
%
{\color[named]{Black}{%
\special{pn 8}%
\special{pa 2186 936}%
\special{pa 2302 962}%
\special{fp}%
}}%
%
{\color[named]{Black}{%
\special{pn 30}%
\special{ar 1416 1206 265 265 3.1415927 6.2831853}%
}}%
\put(1.5600,-12.00){\makebox(0,0){{\color[named]{Black}{\large $\widetilde{l}$}}}}%
\put(8.5600,-13.4600){\makebox(0,0){{\color[named]{Black}{\large $x_3$}}}}%
\put(14.1600,-13.4600){\makebox(0,0){{\color[named]{Black}{\large $\widetilde{o}$}}}}%
\put(19.7600,-13.4600){\makebox(0,0){{\color[named]{Black}{\large $x_1$}}}}%
\put(4.2600,-8.2600){\makebox(0,0){{\color[named]{Black}{\large $v_4$}}}}%
\put(14.1600,-4.7600){\makebox(0,0){{\color[named]{Black}{\large $x_2$}}}}%
\put(9.4600,-9.2600){\makebox(0,0){{\color[named]{Black}{\large $v_3$}}}}%
\put(18.6000,-9.2000){\makebox(0,0){{\color[named]{Black}{\large $v_2$}}}}%
\put(23.9000,-8.2000){\makebox(0,0){{\color[named]{Black}{\large $v_1$}}}}%
\put(29.9000,-13.3000){\makebox(0,0){{\color[named]{Black}{\large $x_3$}}}}%
%

{\color[named]{Black}{%
\special{pn 30}%
\special{pa 2520 1200}%
\special{pa 2520 1200}%
\special{dt 0.045}%
}}%
%
{\color[named]{Black}{%
\special{pn 30}%
\special{pn 30}%
\special{pa 2230 1200}%
\special{pa 2231 1181}%
\special{fp}%
\special{pa 2233 1157}%
\special{pa 2237 1138}%
\special{fp}%
\special{pa 2243 1115}%
\special{pa 2249 1098}%
\special{fp}%
\special{pa 2258 1076}%
\special{pa 2267 1059}%
\special{fp}%
\special{pa 2279 1038}%
\special{pa 2289 1024}%
\special{fp}%
\special{pa 2305 1006}%
\special{pa 2317 992}%
\special{fp}%
\special{pa 2335 977}%
\special{pa 2349 966}%
\special{fp}%
\special{pa 2368 953}%
\special{pa 2385 943}%
\special{fp}%
\special{pa 2406 933}%
\special{pa 2424 926}%
\special{fp}%
\special{pa 2447 919}%
\special{pa 2465 915}%
\special{fp}%
\special{pa 2489 912}%
\special{pa 2507 910}%
\special{fp}%
\special{pa 2532 910}%
\special{pa 2551 912}%
\special{fp}%
\special{pa 2575 915}%
\special{pa 2593 919}%
\special{fp}%
\special{pa 2616 926}%
\special{pa 2633 933}%
\special{fp}%
\special{pa 2655 943}%
\special{pa 2671 953}%
\special{fp}%
\special{pa 2691 966}%
\special{pa 2706 977}%
\special{fp}%
\special{pa 2722 992}%
\special{pa 2736 1006}%
\special{fp}%
\special{pa 2750 1023}%
\special{pa 2761 1039}%
\special{fp}%
\special{pa 2773 1058}%
\special{pa 2782 1076}%
\special{fp}%
\special{pa 2791 1098}%
\special{pa 2797 1115}%
\special{fp}%
\special{pa 2803 1138}%
\special{pa 2807 1157}%
\special{fp}%
\special{pa 2809 1181}%
\special{pa 2810 1200}%
\special{fp}%
}}%
\end{picture}}%
	}
	\caption{An axis $\widetilde{l}$ of $x_1x_3$.}\label{l_2_2_2_fig}
	\end{center}
	\end{figure}
	
\end{proof}

\begin{lemma}\label{key2_lem}
If $\Gamma$ is a connected triangle-free graph with more than one vertex, then
either $\Gamma$ is a complete bipartite graph
or contains the $3$-path graph as a full subgraph.
\end{lemma}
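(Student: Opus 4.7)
The plan is to argue by contraposition: assume $\Gamma$ is connected, triangle-free, with more than one vertex, and is not complete bipartite; then I will produce a full $3$-path subgraph. I split into two cases according to whether $\Gamma$ is bipartite.

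First suppose $\Gamma$ is not bipartite. Then $\Gamma$ contains an odd cycle, and since $\Gamma$ is triangle-free, any odd cycle has length at least $5$. Let $C=(v_1,v_2,\ldots,v_k,v_1)$ be a shortest odd cycle in $\Gamma$, so $k\geq 5$. A standard parity argument shows that $C$ is induced: any chord $v_iv_j$ would split $C$ into two shorter cycles whose lengths sum to $k+2$, hence one of them is odd with length strictly less than $k$, contradicting minimality. Taking four consecutive vertices $v_1,v_2,v_3,v_4$ of $C$ yields a $3$-path full subgraph, since non-consecutive pairs among them are non-adjacent in $C$ by inducedness, and $k\geq 5$ ensures $v_1$ and $v_4$ are not themselves cyclically consecutive.

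Next suppose $\Gamma$ is bipartite with bipartition $V(\Gamma)=V_1\sqcup V_2$. Since $\Gamma$ is not complete bipartite, there exist $u\in V_1$ and $v\in V_2$ with $uv\notin E(\Gamma)$. By connectedness there is a path from $u$ to $v$, and because $u$ and $v$ lie in different parts, any such path has odd length; since $u,v$ are not adjacent, a shortest such path has length at least $3$. Let $u=w_0,w_1,w_2,w_3,\ldots$ be a shortest $u$--$v$ path. Minimality implies this path is induced (any chord would shortcut it), so in particular $w_0,w_1,w_2,w_3$ form a $3$-path full subgraph: the three consecutive edges are present, while $w_0w_2$ and $w_1w_3$ are forbidden by bipartiteness ($w_0,w_2\in V_1$ and $w_1,w_3\in V_2$), and $w_0w_3$ is ruled out by the shortest-path property.

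Both cases yield an induced $3$-path in $\Gamma$, completing the proof. No single step is really an obstacle; the only mildly delicate point is the inducedness of the shortest odd cycle in the non-bipartite case, but this is the standard ``shortest cycle is chordless'' observation applied to odd cycles via parity. The overall strategy is clean precisely because ``not complete bipartite'' gives us, in the bipartite case, a pair of non-adjacent cross-part vertices whose shortest connecting path is automatically long enough and automatically induced.
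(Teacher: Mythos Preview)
Your proof is correct and follows essentially the same approach as the paper: both arguments split into a case producing a long induced cycle (you via a shortest odd cycle in the non-bipartite case; the paper via any induced $n$-cycle with $n\geq 5$) and a bipartite case handled by taking a shortest path between non-adjacent vertices in opposite parts. Your bipartite/non-bipartite dichotomy is arguably a cleaner way to organize the same ideas, but the substance is identical.
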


\begin{proof}
Let $\Gamma$ be a connected triangle-free graph with more than one vertex.
If $\Gamma$ contains an $n$-cycle of $n\geq 5$ as a full subgraph, then $\Gamma$ contains a $3$-path subgraph of the $n$-cycle as a full subgraph.
Otherwise $\Gamma$ does not have odd cycles and thus is a bipartite graph. 
We divide the vertex set of $\Gamma$ into two non-empty subsets $V_W$ and $V_B$ such that every edge connects a vertex in $V_W$ and one in $V_B$.
If $\Gamma$ is not a complete bipartite graph,
then there exist vertices $v_W\in V_W$ and $v_B\in V_B$ 
of graph distance greater than $2$, and
every shortest path from $v_{W}$ to $v_{B}$ is a full subgraph.
For such a path, any $3$-subpath is a full subgraph of $\Gamma$.
An example of $\Gamma$ is shown in Figure~\ref{complete_bipartite_ex_fig},
where the division of the vertex set is drawn as white/black coloring of vertices. 
 
	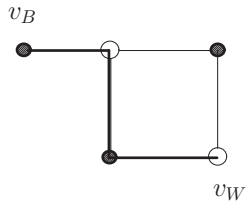
\begin{figure}
	\begin{center}
	\scalebox{0.7}{
{\unitlength 0.1in%
\begin{picture}(16.6100,11.8500)(8.4000,-16.4500)%
\put(9.9000,-5.2500){\makebox(0,0){\Large $v_B$}}%
\put(24.0000,-19.00){\makebox(0,0)[lb]{\Large $v_W$}}%
%
\special{pn 8}%
\special{pa 1630 785}%
\special{pa 1030 785}%
\special{fp}%
%
\special{pn 20}%
\special{pa 2430 1585}%
\special{pa 1630 1585}%
\special{fp}%
\special{pa 1630 1585}%
\special{pa 1630 785}%
\special{fp}%
\special{pa 1630 785}%
\special{pa 1030 785}%
\special{fp}%
%
\special{pn 20}%
\special{ar 2434 780 50 45 0.0000000 6.2831853}%
%
\special{pn 4}%
\special{pa 2472 755}%
\special{pa 2409 812}%
\special{fp}%
\special{pa 2476 764}%
\special{pa 2416 818}%
\special{fp}%
\special{pa 2482 773}%
\special{pa 2429 821}%
\special{fp}%
\special{pa 2479 789}%
\special{pa 2442 823}%
\special{fp}%
\special{pa 2464 749}%
\special{pa 2402 805}%
\special{fp}%
\special{pa 2456 742}%
\special{pa 2394 798}%
\special{fp}%
\special{pa 2444 740}%
\special{pa 2389 789}%
\special{fp}%
\special{pa 2432 737}%
\special{pa 2386 778}%
\special{fp}%
%
\special{pn 8}%
\special{ar 1637 780 67 65 0.0000000 6.2831853}%
%
\special{pn 20}%
\special{ar 1634 1580 50 45 0.0000000 6.2831853}%
%
\special{pn 4}%
\special{pa 1672 1555}%
\special{pa 1609 1612}%
\special{fp}%
\special{pa 1676 1564}%
\special{pa 1616 1618}%
\special{fp}%
\special{pa 1682 1573}%
\special{pa 1629 1621}%
\special{fp}%
\special{pa 1679 1589}%
\special{pa 1642 1623}%
\special{fp}%
\special{pa 1664 1549}%
\special{pa 1602 1605}%
\special{fp}%
\special{pa 1656 1542}%
\special{pa 1594 1598}%
\special{fp}%
\special{pa 1644 1540}%
\special{pa 1589 1589}%
\special{fp}%
\special{pa 1632 1537}%
\special{pa 1586 1578}%
\special{fp}%
%
\special{pn 8}%
\special{ar 2434 1575 67 65 0.0000000 6.2831853}%
%
\special{pn 8}%
\special{pa 1644 1535}%
\special{pa 1644 845}%
\special{fp}%
%
\special{pn 8}%
\special{pa 1704 785}%
\special{pa 2394 785}%
\special{fp}%
%
\special{pn 8}%
\special{pa 2434 785}%
\special{pa 2434 1515}%
\special{fp}%
%
\special{pn 8}%
\special{pa 2374 1585}%
\special{pa 1664 1585}%
\special{fp}%
%
\special{pn 20}%
\special{ar 1000 780 50 45 0.0000000 6.2831853}%
%
\special{pn 4}%
\special{pa 1038 755}%
\special{pa 975 812}%
\special{fp}%
\special{pa 1042 764}%
\special{pa 982 818}%
\special{fp}%
\special{pa 1048 773}%
\special{pa 995 821}%
\special{fp}%
\special{pa 1045 789}%
\special{pa 1008 823}%
\special{fp}%
\special{pa 1030 749}%
\special{pa 968 805}%
\special{fp}%
\special{pa 1022 742}%
\special{pa 960 798}%
\special{fp}%
\special{pa 1010 740}%
\special{pa 955 789}%
\special{fp}%
\special{pa 998 737}%
\special{pa 952 778}%
\special{fp}%
\end{picture}}%
	}
	\end{center}
	\caption{A bipartite graph $\Gamma$ which is not complete.}\label{complete_bipartite_ex_fig}
	\end{figure}
\end{proof}

\begin{proof}[Proof of Theorem~\ref{triangle-free_thm}]
Let $\Gamma$ be a finite simple graph with edges labeled by integers greater than $1$.
Suppose that $\Gamma$ is triangle-free with at least three vertices.

[(i) $\Rightarrow$ (ii)] Suppose that $A_{\Gamma}$ is acylindrically hyperbolic. In general, an acylindrically hyperbolic group does not decompose as a direct product of two infinite groups (\cite{MR3430352}).
Since $K_{\Gamma}$ in Theorem~\ref{CAT(0)_thm} is a finite $K(A_{\Gamma}, 1)$ space,
$A_{\Gamma}$ is torsion free. 
It follows that $A_{\Gamma}$ does not decompose as a direct product of two nontrivial subgroups.

[(ii) $\Rightarrow$ (iii)]  We prove the contrapositive.
If $\Gamma$ is a join of two non-empty subgraphs $\Gamma_1$ and $\Gamma_2$ such that all edges between them are labeled by $2$,
then $A_{\Gamma}$ is the direct product of $A_{\Gamma_1}$ and $A_{\Gamma_2}$.

[(iii) $\Rightarrow$ (iv)] We prove the contrapositive.
If $\Gamma$ is a complete bipartite graph with all edges labeled by $2$, 
then $\Gamma$ is a join of two non-empty subgraphs without any edge $\Gamma_1$ and $\Gamma_2$
such that all edges between them are labeled by $2$.
Therefore, $A_{\Gamma}$ is reducible.

[(iv) $\Rightarrow$ (v)] 
Suppose that $\Gamma$ is not a complete bipartite graph with all edges labeled by $2$.
Under this assumption,
it is enough to consider the following three cases.
$(1)$ $\Gamma$ is not connected,
$(2)$ $\Gamma$ is connected and at least one edge of $\Gamma$ is labeled by an integer greater than $2$, and
$(3)$ $\Gamma$ is connected and all edges of $\Gamma$ are labeled by $2$.
In the second case, $\Gamma$ contains a $2$-path full subgraph with an edge labeled by an integer greater than $2$.
In the third case, 
we can apply Lemma~\ref{key2_lem}, since $\Gamma$ is a connected triangle-free graph with more than one vertex, 
It follows that $\Gamma$ either is a complete bipartite graph or contains a $3$-path full subgraph.
By the assumption, $\Gamma$ contains a $3$-path full subgraph with all edges labeled by $2$.

[(v) $\Rightarrow$ (i)]
First, if $\Gamma$ is disconnected, then $A_{\Gamma}$ decomposes as a free product of two infinite subgroups, and thus acylindrically hyperbolic.
Second, suppose that $\Gamma$ contains a $2$-path full subgraph with an edge labeled by an integer greater than $2$.
Let us assign a direction to the subgraph in the same way as $\Gamma_{m,n}$ in Figure~\ref{key_fig}.
We direct other edges arbitrarily.
With this direction, $\Gamma$ satisfies the assumptions in Lemma~\ref{key1_lem}.
Lemma~\ref{key1_lem} shows that $A_{\Gamma}$ is acylindrically hyperbolic.
Finally, suppose that $\Gamma$ contains a $3$-path full subgraph with all edges labeled by $2$.
Let us assign a direction to the subgraph in the same way as $\Gamma_{2,2,2}$ in Figure~\ref{key_fig}.
Other edges are directed arbitrarily.
With this direction, $\Gamma$ satisfies the assumptions in Lemma~\ref{key1_lem}.
Lemma~\ref{key1_lem} shows that $A_{\Gamma}$ is acylindrically hyperbolic.

Suppose that $A_\Gamma$ is acylindrically hyperbolic.
Since an acylindrically hyperbolic group does not admit an infinite center (\cite{MR3430352}), the center of $A_{\Gamma}$ is finite.
Since $A_{\Gamma}$ has a finite $K(A_{\Gamma}, 1)$ space $K_{\Gamma}$,
$A_{\Gamma}$ is torsion free.
It follows that the center of $A_{\Gamma}$ is trivial.

Let $A_{\Gamma}$ be an irreducible triangle-free Artin-Tits group.
When $\Gamma$ has three or more vertices, 
the central quotient of $A_{\Gamma}$ is $A_{\Gamma}$ itself, which is acylindrically hyperbolic.
When $\Gamma$ has less than three vertices, see Remark~\ref{trivial_rem}.
\end{proof}


\subsection{Artin-Tits groups associated to cones over square-free bipartite graphs}\label{triangle_sec}
In this section, we prove Theorem~\ref{generalizations_thm} and Corollary~\ref{square-free_thm}.

\begin{lemma}\label{generalizations_lem}
Let $A_{\Gamma}$ be an Artin-Tits group of almost large type.
Suppose that $\Gamma$ is a complete bipartite graph with all edges labeled by $2$.
Then $\Gamma$ is a cone over a graph consisting of only isolated vertices with all edges labeled by $2$.
\end{lemma}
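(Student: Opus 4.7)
The plan is to unpack the definition of almost large type against the hypothesis that $\Gamma$ is a complete bipartite graph with all edges labeled by $2$, and derive a combinatorial restriction on the bipartition.

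First I would write $\Gamma = K_{m,n}$ for the two parts of the bipartition, of sizes $m$ and $n$ respectively. Since $\Gamma$ is bipartite, it contains no odd cycles, and in particular no $3$-cycles; so condition (1) in the definition of almost large type (all edges of any $3$-cycle labeled $>2$) is vacuous and gives no information. The content therefore has to come from condition (2) on $4$-cycles.

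Next I would argue by contradiction that $\min(m,n) \leq 1$. Suppose on the contrary that $m \geq 2$ and $n \geq 2$. Pick two vertices $u_1, u_2$ in the first part and two vertices $w_1, w_2$ in the second part. Because $\Gamma = K_{m,n}$ is complete bipartite, all four edges $u_i w_j$ are present, and they form a $4$-cycle $u_1 w_1 u_2 w_2 u_1$ in $\Gamma$. By hypothesis every edge of $\Gamma$ is labeled by $2$, so no edge of this $4$-cycle carries a label $>2$, contradicting condition (2) of the definition of almost large type, which requires at least two such edges. Hence $m = 1$ or $n = 1$.

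Finally, without loss of generality say $m=1$, and let $v_0$ be the unique vertex in the first part. Then $\Gamma$ consists of $v_0$ together with $n$ vertices in the second part, the second part carries no edges (both endpoints of any edge of $\Gamma$ lie in different parts), and $v_0$ is joined to each vertex of the second part by an edge labeled $2$. This is exactly the statement that $\Gamma = \{v_0\} \ast \Gamma'_1$, where $\Gamma'_1$ is a graph on $n$ isolated vertices, and every edge of $\Gamma$ is labeled by $2$, completing the proof. No step looks genuinely difficult here; the only thing to be careful about is to handle the degenerate cases $n=0$ or $n=1$, where $\Gamma$ is a single vertex or a single edge, but these still fit the conclusion since a graph on $\leq 1$ isolated vertices is a (possibly degenerate) graph of isolated vertices and its cone recovers $\Gamma$.
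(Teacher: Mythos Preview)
Your argument is correct and is essentially the same as the paper's: both obtain a $4$-cycle with all edges labeled $2$ from the assumption that the complete bipartite graph is not a cone, contradicting condition (2) of almost large type. Your version simply makes the steps more explicit by naming the bipartition sizes and exhibiting the $4$-cycle.
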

\begin{proof}
Assume that $\Gamma$ is not a cone.
Then $\Gamma$ has a square with all edges labeled by $2$,
which contradicts the assumption that $A_{\Gamma}$ is of almost large type. 
\end{proof}

\begin{proof}[Proof of Theorem~\ref{generalizations_thm}]
[(i) $\Rightarrow$ (ii) $\Rightarrow$ (iii) $\Rightarrow$ (iv)]
We repeat the same argument as in the proof of Theorem~\ref{triangle-free_thm}. 

[(iv) $\Rightarrow$ (i)]
Let $A_{\Gamma}$ be triangle-free. 
By Lemma~\ref{generalizations_lem}, (iv) in Theorem~\ref{generalizations_thm} implies (iv) in Theorem~\ref{triangle-free_thm}.
Therefore, according to Theorem~\ref{triangle-free_thm}, $A_{\Gamma}$ is acylindrically hyperbolic.

We consider the case where $\Gamma$ has a $3$-cycle. 
Let us assign $\Gamma$ an appropriate direction.
Let $\widetilde{K}_{\Gamma}$ be the $A_{\Gamma}$-equivariant CAT(0) space in Theorem~\ref{generalizations_CAT(0)_thm}.
Let $L_{\Gamma}$ be the link of the unique vertex $o$ of $K_{\Gamma}$.
We find a rank one isometry in $A_{\Gamma}$.
We fix a $3$-cycle of $\Gamma$.
Then, as a subgraph of the $3$-cycle,
we take a $2$-path subgraph $\Gamma'$ in Figure~\ref{ordered_vertices_fig}.
By Lemma~\ref{pres_gen_lem}, $A_{\Gamma'}$ has the following presentation:
        \begin{align}
	\left\langle 
	\begin{array}{l|l}
	v_1, v_2, v_3, x_1, x_2, &x_1=v_1v_2, x_1=v_2\alpha_{1,3}, \ldots, x_1=\alpha_{1,m} v_1,\\
	\alpha_{1,3}, \ldots, \alpha_{1,m},&x_2=v_2v_3, x_2=v_3\alpha_{2,3}, \ldots, x_2=\alpha_{2,n} v_2
\\ \alpha_{2,3}, \ldots, \alpha_{2,n} 	\end{array}
	\right\rangle. \label{pres_m_n_eq_2}
	\end{align}

Since $\Gamma'$ is not a full subgraph of $\Gamma$,
$A_{\Gamma'}$ is not a subgroup of $A_{\Gamma}$.
On the other hand, the presentation complex $K_{\Gamma'}$ of (\ref{pres_m_n_eq_2}) is a subcomplex of $K_{\Gamma}$.
Let $L_{\Gamma'}\subset L_{\Gamma}$ be the link in $K_{\Gamma'}$, see Figure~\ref{link2_fig}.
As shown in Figure~\ref{geod_fig}, we take a directed loop $l$ in $K_{\Gamma'}$,
which intersects with $L_{\Gamma'}$ at two points $l^+$ and $l^-$.
In $L_{\Gamma'}$, the distance between $l^{+}$ and $l^{-}$ is greater than $\pi$, see Figure~\ref{link2_fig}. 
Even in $L_{\Gamma}$, the distance between $l^{+}$ and $l^{-}$ is greater than $\pi$.
Indeed, a shortest path from $l^+$ to $l^-$ in $L_{\Gamma}$ through $L_{\Gamma}-L_{\Gamma'}$
should go out from $L_{\Gamma'}$ at a point in $\{v_i^{\pm}\}_{i=1,2,3}$, passes at least one edge in $L_{\Gamma}-L_{\Gamma'}$, and comes back into $L_{\Gamma'}$ at another point in $\{v_i^{\pm}\}_{i=1,2,3}$.
The length of such a path is greater than $\pi$,
since the distance between $l^{\pm}$ and $\{v_i^{\pm}\}_{i=1,2,3}$ in $L_{\Gamma'}$ is greater than $\pi/3$.
Therefore,
	\begin{align}
	\begin{aligned}	\label{angle_eq}
	d_{L_{\Gamma}} (l^{+}, l^{-}) >\pi.
	\end{aligned}
	\end{align}
As in the proof of Lemma~\ref{key1_lem}, $l$ is a local geodesic around $o$,
and the lift $\widetilde{l}$ through $\widetilde{o}$ is an axis of $\gamma=x_2\alpha_{1,3}$ in $A_{\Gamma}$ (see Figure~\ref{geod_fig}).
Assume that $\widetilde{l}$ bounds a flat half plane $E$.
By $\mathrm{Pr}:\widetilde{K}_{\Gamma} \to K_{\Gamma}$, the unit semicircle in $E$ centered at $\widetilde{o}$ on $\widetilde{l}$ is isometric to a path of length $\pi$ from $l^{+}$ to $l^{-}$ in ${L}_{\Gamma}$, contrary to (\ref{angle_eq}). 

When $A_{\Gamma}$ is acylindrically hyperbolic, the same discussion as in the proof of Theorem~\ref{triangle-free_thm} shows that the center of $A_{\Gamma}$ is trivial. 
Similarly, Conjecture~\ref{acyl_conj} can be confirmed.
	
	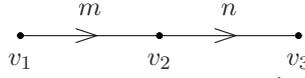
\begin{figure}	
	\begin{center}
	\scalebox{0.9}{
{\unitlength 0.1in%
\begin{picture}(17.5000,2.5500)(2.5000,-4.4000)%
%
{\color[named]{Black}{%
\special{pn 8}%
\special{pa 400 400}%
\special{pa 1200 400}%
\special{fp}%
\special{pa 1200 400}%
\special{pa 1200 400}%
\special{fp}%
}}%
%
{\color[named]{Black}{%
\special{pn 8}%
\special{pa 1200 400}%
\special{pa 2000 400}%
\special{fp}%
}}%
%
{\color[named]{Black}{%
\special{pn 4}%
\special{sh 1}%
\special{ar 1200 400 16 16 0 6.2831853}%
\special{sh 1}%
\special{ar 1200 400 16 16 0 6.2831853}%
}}%
%
{\color[named]{Black}{%
\special{pn 8}%
\special{pa 1640 400}%
\special{pa 1520 360}%
\special{fp}%
\special{pa 1520 440}%
\special{pa 1640 400}%
\special{fp}%
}}%
%
{\color[named]{Black}{%
\special{pn 8}%
\special{pa 840 400}%
\special{pa 720 360}%
\special{fp}%
\special{pa 720 440}%
\special{pa 840 400}%
\special{fp}%
}}%
%
{\color[named]{Black}{%
\special{pn 4}%
\special{sh 1}%
\special{ar 400 400 16 16 0 6.2831853}%
\special{sh 1}%
\special{ar 400 400 16 16 0 6.2831853}%
}}%
%
{\color[named]{Black}{%
\special{pn 4}%
\special{sh 1}%
\special{ar 2000 400 16 16 0 6.2831853}%
\special{sh 1}%
\special{ar 2000 400 16 16 0 6.2831853}%
}}%
\put(4.0000,-5.5000){\makebox(0,0){{\color[named]{Black}{$v_1$}}}}%
\put(8.0000,-2.5000){\makebox(0,0){{\color[named]{Black}{$m$}}}}%
\put(12.0000,-5.5000){\makebox(0,0){{\color[named]{Black}{$v_2$}}}}%
\put(16.0000,-2.5000){\makebox(0,0){{\color[named]{Black}{$n$}}}}%
\put(20.0000,-5.5000){\makebox(0,0){{\color[named]{Black}{$v_3$}}}}%
\end{picture}}%
	}
	\caption{A $2$-path directed graph $\Gamma'$ with edges labeled by $m,n\geq 3$.}\label{ordered_vertices_fig}
	\end{center}
	\end{figure}
	
	\begin{figure}
	\begin{center}
	\hspace{1.8cm}
	\scalebox{0.8}{
	\input{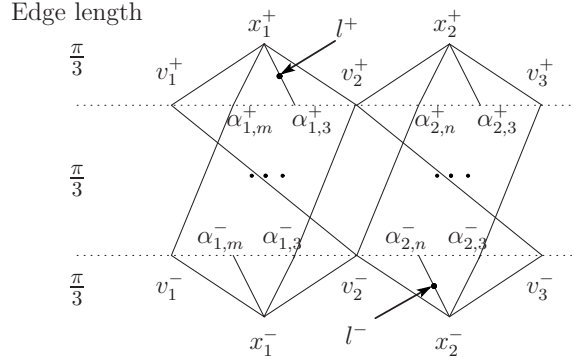}
	}
	\end{center}
	\caption{The subgraph $L_{\Gamma'}$ of $L_{\Gamma}$ corresponding to the $2$-path subgraph $\Gamma'$ of $\Gamma$, and two points ${l}^{\pm}$.}\label{link2_fig}
	\end{figure}
		
	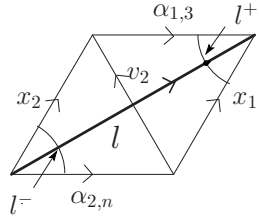
\begin{figure}
	\begin{center}
	\scalebox{0.7}{
{\unitlength 0.1in%
\begin{picture}(20.4500,14.8300)(4.9500,-18.1800)%
%
{\color[named]{Black}{%
\special{pn 8}%
\special{pa 600 1600}%
\special{pa 1800 1600}%
\special{fp}%
}}%
%
{\color[named]{Black}{%
\special{pn 8}%
\special{pa 1800 1600}%
\special{pa 1200 560}%
\special{fp}%
}}%
%
{\color[named]{Black}{%
\special{pn 8}%
\special{pa 1200 560}%
\special{pa 600 1590}%
\special{fp}%
}}%
%
{\color[named]{Black}{%
\special{pn 8}%
\special{pa 1200 1600}%
\special{pa 1140 1530}%
\special{fp}%
}}%
%
{\color[named]{Black}{%
\special{pn 8}%
\special{pa 1200 1600}%
\special{pa 1130 1660}%
\special{fp}%
}}%
%
{\color[named]{Black}{%
\special{pn 8}%
\special{pa 1373 861}%
\special{pa 1363 953}%
\special{fp}%
}}%
%
{\color[named]{Black}{%
\special{pn 8}%
\special{pa 1380 860}%
\special{pa 1471 870}%
\special{fp}%
}}%
%
{\color[named]{Black}{%
\special{pn 8}%
\special{pa 945 1041}%
\special{pa 955 1133}%
\special{fp}%
}}%
%
{\color[named]{Black}{%
\special{pn 8}%
\special{pa 945 1041}%
\special{pa 854 1051}%
\special{fp}%
}}%
\put(12.0000,-18.0000){\makebox(0,0){{\color[named]{Black}{\Large $\alpha_{2,n}$}}}}%
\put(15.4000,-8.6000){\makebox(0,0){{\color[named]{Black}{\Large $v_2$}}}}%
\put(7.3000,-10.5000){\makebox(0,0){{\color[named]{Black}{\Large $x_2$}}}}%
%
{\color[named]{Black}{%
\special{pn 8}%
\special{pa 1200 560}%
\special{pa 2400 560}%
\special{fp}%
}}%
%
{\color[named]{Black}{%
\special{pn 8}%
\special{pa 2400 560}%
\special{pa 1800 1600}%
\special{fp}%
}}%
%
{\color[named]{Black}{%
\special{pn 8}%
\special{pa 1800 570}%
\special{pa 1740 500}%
\special{fp}%
}}%
%
{\color[named]{Black}{%
\special{pn 8}%
\special{pa 1800 570}%
\special{pa 1730 630}%
\special{fp}%
}}%
%
{\color[named]{Black}{%
\special{pn 8}%
\special{pa 2135 1041}%
\special{pa 2044 1051}%
\special{fp}%
}}%
%
{\color[named]{Black}{%
\special{pn 8}%
\special{pa 2135 1041}%
\special{pa 2145 1133}%
\special{fp}%
}}%
\put(23.3000,-10.5000){\makebox(0,0){{\color[named]{Black}{\Large $x_1$}}}}%
\put(18.0000,-4.0000){\makebox(0,0){{\color[named]{Black}{\Large $\alpha_{1,3}$}}}}%
%
{\color[named]{Black}{%
\special{pn 20}%
\special{pa 600 1600}%
\special{pa 2400 560}%
\special{fp}%
}}%
%
{\color[named]{Black}{%
\special{pn 13}%
\special{pa 1780 910}%
\special{pa 1720 880}%
\special{fp}%
}}%
%
{\color[named]{Black}{%
\special{pn 13}%
\special{pa 1810 910}%
\special{pa 1770 990}%
\special{fp}%
}}%
%
{\color[named]{Black}{%
\special{pn 8}%
\special{ar 2380 560 400 400 2.0005586 3.1415927}%
}}%
%
{\color[named]{Black}{%
\special{pn 4}%
\special{sh 1}%
\special{ar 2040 770 16 16 0 6.2831853}%
\special{sh 1}%
\special{ar 2040 770 16 16 0 6.2831853}%
}}%
\put(23.4000,-4.0000){\makebox(0,0){{\color[named]{Black}{\Large $l^{+}$}}}}%
%
{\color[named]{Black}{%
\special{pn 4}%
\special{sh 1}%
\special{ar 2060 760 8 8 0 6.2831853}%
\special{sh 1}%
\special{ar 2060 760 8 8 0 6.2831853}%
}}%
%
{\color[named]{Black}{%
\special{pn 8}%
\special{ar 595 1588 400 400 5.2279380 6.2831853}%
}}%
%
{\color[named]{Black}{%
\special{pn 8}%
\special{pa 675 1818}%
\special{pa 675 1818}%
\special{fp}%
}}%
%
{\color[named]{Black}{%
\special{pn 8}%
\special{pa 2180 500}%
\special{pa 2040 700}%
\special{fp}%
\special{sh 1}%
\special{pa 2040 700}%
\special{pa 2095 657}%
\special{pa 2071 656}%
\special{pa 2062 634}%
\special{pa 2040 700}%
\special{fp}%
}}%
%
{\color[named]{Black}{%
\special{pn 8}%
\special{pa 765 1698}%
\special{pa 935 1428}%
\special{fp}%
\special{sh 1}%
\special{pa 935 1428}%
\special{pa 883 1474}%
\special{pa 907 1473}%
\special{pa 916 1495}%
\special{pa 935 1428}%
\special{fp}%
}}%
\put(6.7500,-18.1800){\makebox(0,0){{\color[named]{Black}{\Large $l^{-}$}}}}%
\put(13.7500,-13.1800){\makebox(0,0){{\color[named]{Black}{\LARGE $l$}}}}%
\end{picture}}%
	}
	\end{center}
	\caption{A directed loop $l$ in $K_{\Gamma'}$ and its intersection points with $L_{\Gamma'}$.}\label{geod_fig}
	\end{figure}
\end{proof}

\begin{proof}[Proof of Corollary~\ref{square-free_thm}]
Let $\Gamma$ satisfy assumptions in Corollary~\ref{square-free_thm}.
By assumptions on edge labels of $\Gamma$, $A_\Gamma$ is of almost large type.
We show that $\Gamma$ can be appropriately directed.
Note that $\Gamma'_2$ is bipartite.
We color vertices of $\Gamma'_2$ by black and white
such that every edge in $\Gamma'_2$ connects a white vertex and a black vertex.
We give a direction to every edge $e$ of $\Gamma$ as follows.
If $e$ is in $\{v_0\}\ast \Gamma'_1$, $e$ is directed arbitrarily.
If $e$ is in $\Gamma'_2$, $e$ goes from a white vertex to a black vertex.
Otherwise $e$ goes from $v_0$ to a white vertex, or goes from a black vertex to $v_0$.
Then every $3$-cycle is the one in Figure~\ref{squares2_fig}, as shown in Figure~\ref{correct_triangle_fig}.
Note that $\Gamma'_2$ has no $4$-cycles.
Since every $4$-cycle shares two edges with such a directed $3$-cycle,
it is the same as the rightmost $4$-cycle in Figure~\ref{squares2_fig}.
Hence $\Gamma$ is appropriately directed.
Since $\Gamma$ satisfies the condition (iv) in Theorem~\ref{generalizations_thm},
$A_{\Gamma}$ satisfies (i), (ii) and (iii) in Theorem~\ref{generalizations_thm}.
	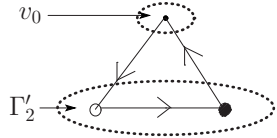
\begin{figure}[h]
	\begin{center}
	\hspace{0cm}
	\scalebox{0.7}{
{\unitlength 0.1in%
\begin{picture}(21.2500,9.8000)(2.3500,-10.3000)%
%
{\color[named]{Black}{%
\special{pn 8}%
\special{ar 1041 845 41 45 0.0000000 6.2831853}%
}}%
%
{\color[named]{Black}{%
\special{pn 8}%
\special{ar 1999 840 42 45 0.0000000 6.2831853}%
}}%
%
{\color[named]{Black}{%
\special{pn 20}%
\special{pa 1999 800}%
\special{pa 1958 845}%
\special{fp}%
\special{pa 1962 863}%
\special{pa 2020 805}%
\special{fp}%
\special{pa 2031 814}%
\special{pa 1970 877}%
\special{fp}%
\special{pa 1981 888}%
\special{pa 2035 829}%
\special{fp}%
\special{pa 1999 888}%
\special{pa 2041 843}%
\special{fp}%
}}%
%
{\color[named]{Black}{%
\special{pn 8}%
\special{sh 1}%
\special{ar 1560 160 16 16 0 6.2831853}%
\special{sh 1}%
\special{ar 1560 160 16 16 0 6.2831853}%
}}%
%
{\color[named]{Black}{%
\special{pn 8}%
\special{pa 1560 160}%
\special{pa 1990 840}%
\special{fp}%
}}%
%
{\color[named]{Black}{%
\special{pn 8}%
\special{pa 1550 160}%
\special{pa 1040 840}%
\special{fp}%
}}%
%
{\color[named]{Black}{%
\special{pn 8}%
\special{pa 1700 370}%
\special{pa 1700 510}%
\special{fp}%
\special{pa 1700 510}%
\special{pa 1700 510}%
\special{fp}%
\special{pa 1700 377}%
\special{pa 1833 443}%
\special{fp}%
}}%
%
{\color[named]{Black}{%
\special{pn 8}%
\special{pa 1200 630}%
\special{pa 1200 490}%
\special{fp}%
\special{pa 1200 490}%
\special{pa 1200 490}%
\special{fp}%
\special{pa 1200 623}%
\special{pa 1333 557}%
\special{fp}%
}}%
%
{\color[named]{Black}{%
\special{pn 8}%
\special{pa 1080 830}%
\special{pa 1970 830}%
\special{fp}%
}}%
%
{\color[named]{Black}{%
\special{pn 8}%
\special{pa 1600 830}%
\special{pa 1500 750}%
\special{fp}%
\special{pa 1500 900}%
\special{pa 1590 830}%
\special{fp}%
}}%
%
{\color[named]{Black}{%
\special{pn 20}%
\special{pn 20}%
\special{pa 1770 160}%
\special{pa 1769 168}%
\special{fp}%
\special{pa 1758 197}%
\special{pa 1753 203}%
\special{fp}%
\special{pa 1728 226}%
\special{pa 1722 230}%
\special{fp}%
\special{pa 1690 246}%
\special{pa 1682 249}%
\special{fp}%
\special{pa 1648 260}%
\special{pa 1640 262}%
\special{fp}%
\special{pa 1605 267}%
\special{pa 1597 268}%
\special{fp}%
\special{pa 1560 270}%
\special{pa 1552 270}%
\special{fp}%
\special{pa 1515 267}%
\special{pa 1507 266}%
\special{fp}%
\special{pa 1471 260}%
\special{pa 1463 258}%
\special{fp}%
\special{pa 1429 246}%
\special{pa 1422 243}%
\special{fp}%
\special{pa 1391 225}%
\special{pa 1385 221}%
\special{fp}%
\special{pa 1362 196}%
\special{pa 1358 191}%
\special{fp}%
\special{pa 1350 159}%
\special{pa 1351 152}%
\special{fp}%
\special{pa 1363 122}%
\special{pa 1367 117}%
\special{fp}%
\special{pa 1392 94}%
\special{pa 1398 90}%
\special{fp}%
\special{pa 1431 73}%
\special{pa 1438 71}%
\special{fp}%
\special{pa 1471 60}%
\special{pa 1479 59}%
\special{fp}%
\special{pa 1515 52}%
\special{pa 1523 52}%
\special{fp}%
\special{pa 1560 50}%
\special{pa 1568 50}%
\special{fp}%
\special{pa 1605 53}%
\special{pa 1613 54}%
\special{fp}%
\special{pa 1648 60}%
\special{pa 1656 62}%
\special{fp}%
\special{pa 1690 73}%
\special{pa 1698 77}%
\special{fp}%
\special{pa 1728 94}%
\special{pa 1734 99}%
\special{fp}%
\special{pa 1758 123}%
\special{pa 1762 129}%
\special{fp}%
\special{pa 1770 160}%
\special{pa 1770 160}%
\special{fp}%
}}%
%
{\color[named]{Black}{%
\special{pn 8}%
\special{pa 1560 830}%
\special{pa 1560 830}%
\special{dt 0.045}%
}}%
%
{\color[named]{Black}{%
\special{pn 20}%
\special{pn 20}%
\special{pa 2360 830}%
\special{pa 2359 838}%
\special{fp}%
\special{pa 2344 870}%
\special{pa 2339 876}%
\special{fp}%
\special{pa 2310 899}%
\special{pa 2303 904}%
\special{fp}%
\special{pa 2271 921}%
\special{pa 2264 925}%
\special{fp}%
\special{pa 2230 939}%
\special{pa 2222 942}%
\special{fp}%
\special{pa 2187 955}%
\special{pa 2179 957}%
\special{fp}%
\special{pa 2143 967}%
\special{pa 2136 969}%
\special{fp}%
\special{pa 2099 978}%
\special{pa 2092 980}%
\special{fp}%
\special{pa 2055 987}%
\special{pa 2047 989}%
\special{fp}%
\special{pa 2011 995}%
\special{pa 2003 997}%
\special{fp}%
\special{pa 1966 1002}%
\special{pa 1958 1004}%
\special{fp}%
\special{pa 1921 1008}%
\special{pa 1913 1010}%
\special{fp}%
\special{pa 1876 1014}%
\special{pa 1868 1014}%
\special{fp}%
\special{pa 1831 1018}%
\special{pa 1823 1019}%
\special{fp}%
\special{pa 1786 1022}%
\special{pa 1778 1022}%
\special{fp}%
\special{pa 1741 1025}%
\special{pa 1733 1025}%
\special{fp}%
\special{pa 1696 1027}%
\special{pa 1688 1027}%
\special{fp}%
\special{pa 1650 1029}%
\special{pa 1642 1029}%
\special{fp}%
\special{pa 1605 1030}%
\special{pa 1597 1030}%
\special{fp}%
\special{pa 1560 1030}%
\special{pa 1551 1030}%
\special{fp}%
\special{pa 1514 1030}%
\special{pa 1506 1030}%
\special{fp}%
\special{pa 1469 1029}%
\special{pa 1461 1029}%
\special{fp}%
\special{pa 1423 1027}%
\special{pa 1415 1027}%
\special{fp}%
\special{pa 1378 1025}%
\special{pa 1370 1024}%
\special{fp}%
\special{pa 1333 1022}%
\special{pa 1325 1021}%
\special{fp}%
\special{pa 1288 1018}%
\special{pa 1280 1017}%
\special{fp}%
\special{pa 1243 1013}%
\special{pa 1235 1013}%
\special{fp}%
\special{pa 1198 1009}%
\special{pa 1190 1007}%
\special{fp}%
\special{pa 1153 1002}%
\special{pa 1145 1001}%
\special{fp}%
\special{pa 1108 995}%
\special{pa 1101 994}%
\special{fp}%
\special{pa 1064 987}%
\special{pa 1056 986}%
\special{fp}%
\special{pa 1020 977}%
\special{pa 1012 975}%
\special{fp}%
\special{pa 976 966}%
\special{pa 968 965}%
\special{fp}%
\special{pa 932 954}%
\special{pa 924 952}%
\special{fp}%
\special{pa 889 939}%
\special{pa 882 936}%
\special{fp}%
\special{pa 848 921}%
\special{pa 841 918}%
\special{fp}%
\special{pa 809 899}%
\special{pa 803 894}%
\special{fp}%
\special{pa 776 869}%
\special{pa 771 863}%
\special{fp}%
\special{pa 760 830}%
\special{pa 761 822}%
\special{fp}%
\special{pa 777 790}%
\special{pa 782 783}%
\special{fp}%
\special{pa 810 760}%
\special{pa 816 756}%
\special{fp}%
\special{pa 849 739}%
\special{pa 856 735}%
\special{fp}%
\special{pa 890 721}%
\special{pa 898 718}%
\special{fp}%
\special{pa 933 705}%
\special{pa 941 703}%
\special{fp}%
\special{pa 977 693}%
\special{pa 985 691}%
\special{fp}%
\special{pa 1021 682}%
\special{pa 1029 680}%
\special{fp}%
\special{pa 1065 673}%
\special{pa 1073 671}%
\special{fp}%
\special{pa 1110 665}%
\special{pa 1118 663}%
\special{fp}%
\special{pa 1154 658}%
\special{pa 1162 656}%
\special{fp}%
\special{pa 1199 652}%
\special{pa 1207 650}%
\special{fp}%
\special{pa 1244 646}%
\special{pa 1252 645}%
\special{fp}%
\special{pa 1289 642}%
\special{pa 1297 641}%
\special{fp}%
\special{pa 1334 638}%
\special{pa 1342 638}%
\special{fp}%
\special{pa 1379 635}%
\special{pa 1387 635}%
\special{fp}%
\special{pa 1425 633}%
\special{pa 1433 633}%
\special{fp}%
\special{pa 1470 631}%
\special{pa 1478 631}%
\special{fp}%
\special{pa 1515 630}%
\special{pa 1523 630}%
\special{fp}%
\special{pa 1561 630}%
\special{pa 1569 630}%
\special{fp}%
\special{pa 1606 630}%
\special{pa 1614 630}%
\special{fp}%
\special{pa 1652 631}%
\special{pa 1660 632}%
\special{fp}%
\special{pa 1697 633}%
\special{pa 1705 633}%
\special{fp}%
\special{pa 1742 635}%
\special{pa 1750 636}%
\special{fp}%
\special{pa 1787 638}%
\special{pa 1795 639}%
\special{fp}%
\special{pa 1832 642}%
\special{pa 1840 643}%
\special{fp}%
\special{pa 1877 647}%
\special{pa 1885 647}%
\special{fp}%
\special{pa 1922 651}%
\special{pa 1930 653}%
\special{fp}%
\special{pa 1967 658}%
\special{pa 1975 659}%
\special{fp}%
\special{pa 2012 665}%
\special{pa 2020 667}%
\special{fp}%
\special{pa 2056 674}%
\special{pa 2064 675}%
\special{fp}%
\special{pa 2100 683}%
\special{pa 2108 684}%
\special{fp}%
\special{pa 2144 694}%
\special{pa 2152 695}%
\special{fp}%
\special{pa 2188 706}%
\special{pa 2196 708}%
\special{fp}%
\special{pa 2231 721}%
\special{pa 2238 724}%
\special{fp}%
\special{pa 2272 739}%
\special{pa 2279 743}%
\special{fp}%
\special{pa 2311 762}%
\special{pa 2317 766}%
\special{fp}%
\special{pa 2344 790}%
\special{pa 2349 797}%
\special{fp}%
}}%
\put(5.6000,-1.4000){\makebox(0,0){{\color[named]{Black}{\Large $v_0$}}}}%
\put(5.000,-8.3000){\makebox(0,0){{\color[named]{Black}{\Large $\Gamma'_2$}}}}%
%
{\color[named]{Black}{%
\special{pn 8}%
\special{pa 630 830}%
\special{pa 890 830}%
\special{fp}%
\special{sh 1}%
\special{pa 890 830}%
\special{pa 823 810}%
\special{pa 837 830}%
\special{pa 823 850}%
\special{pa 890 830}%
\special{fp}%
}}%
%
{\color[named]{Black}{%
\special{pn 8}%
\special{pa 690 140}%
\special{pa 1440 140}%
\special{fp}%
\special{sh 1}%
\special{pa 1440 140}%
\special{pa 1373 120}%
\special{pa 1387 140}%
\special{pa 1373 160}%
\special{pa 1440 140}%
\special{fp}%
}}%
\end{picture}}%
	}
	\caption{A directed $3$-cycle in $\Gamma$.}\label{correct_triangle_fig}
	\end{center}
	\end{figure}
\end{proof}

Let us state one more corollary of Theorem~\ref{generalizations_thm}.
\begin{corollary}\label{square-free_cor}
Let $A_{\Gamma}$ be an Artin-Tits group of almost large type associated to $\Gamma$ with three or more vertices.
Suppose that $\Gamma$ is square-free, that is, $\Gamma$ does not contain $4$-cycles.
When $A_{\Gamma}$ is irreducible, it is acylindrically hyperbolic, directly indecomposable and centerless. 
In particular, Conjecture~\ref{acyl_conj} is true for such an $A_{\Gamma}$. 
\end{corollary}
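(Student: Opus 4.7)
The plan is to reduce the corollary to Theorem~\ref{generalizations_thm} by exhibiting an appropriate direction on $\Gamma$. The first observation is that when $\Gamma$ is square-free, the almost-large-type condition on $4$-cycles and the $4$-cycle clause in the definition of ``appropriate direction'' both hold vacuously, so the hypothesis simply says that every $3$-cycle in $\Gamma$ has all edges labeled by integers greater than $2$, and the only thing one has to direct carefully are the $3$-cycles.

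The main combinatorial point is the following: in a square-free graph any two distinct $3$-cycles must be edge-disjoint. Indeed, if two triangles $\{u,v,x\}$ and $\{u,v,y\}$ shared the edge $\{u,v\}$, then $x - u - y - v - x$ would be a $4$-cycle in $\Gamma$, contradicting square-freeness. I would isolate this as a short lemma; it is immediate from the definitions.

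Consequently I would orient the edges of each $3$-cycle of $\Gamma$ cyclically, which is possible without conflict by the edge-disjointness observation, and orient every remaining edge arbitrarily. The cyclic orientation on a $3$-cycle is exactly the pattern appearing in Figure~\ref{squares2_fig} (it is precisely the orientation used on $3$-cycles in the proof of Corollary~\ref{square-free_thm}), so $\Gamma$ becomes appropriately directed. Then, assuming $A_{\Gamma}$ is irreducible, condition (iii) of Theorem~\ref{generalizations_thm} is satisfied, and the conclusion of that theorem yields acylindrical hyperbolicity, direct indecomposability and centerlessness of $A_{\Gamma}$, confirming Conjecture~\ref{acyl_conj} in this case.

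I do not expect a genuine obstacle: the substance of the proof is the edge-disjointness of $3$-cycles in a square-free graph, and everything else is an appeal to Theorem~\ref{generalizations_thm}. The only point to double-check is that a cyclic orientation truly matches one of the orientations drawn in Figure~\ref{squares2_fig}, which is readily verified by comparison with the triangle orientation used in the proof of Corollary~\ref{square-free_thm}.
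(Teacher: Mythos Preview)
Your proposal is correct and follows essentially the same route as the paper: the paper also observes that square-freeness forces any two triangles to share at most a vertex (hence to be edge-disjoint), directs each triangle as in Figure~\ref{squares2_fig} and the remaining edges arbitrarily, and then invokes Theorem~\ref{generalizations_thm}. The only cosmetic difference is that the paper separates off the triangle-free case and cites Theorem~\ref{triangle-free_thm} directly, whereas you let Theorem~\ref{generalizations_thm} absorb that case; both are fine.
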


\begin{proof}
If $\Gamma$ is triangle-free, then we use Theorem~\ref{triangle-free_thm}.
Otherwise, every two triangles of $\Gamma$ either share only a vertex or are disjoint,
since $\Gamma$ is square-free. 
We can assign a direction to all triangles as in Figure~\ref{squares2_fig}.
When other edges of $\Gamma$ are directed arbitrarily, $\Gamma$ is appropriately directed. 
We apply Theorem~\ref{generalizations_thm}.
\end{proof}

\section*{Acknowledgments}

The authors would like to thank the anonymous reviewers for comments on the previous version of this paper.



\end{document}